\documentclass[a4paper]{amsart}

\usepackage{amssymb}
\usepackage{mathrsfs}
\usepackage[matrix,arrow,curve,tips]{xy}\SelectTips{cm}{}

\newcommand{\NN}{\mathbb{N}}
\newcommand{\ZZ}{\mathbb{Z}}
\newcommand{\RR}{\mathbb{R}}

\newcommand{\Hol}{\textnormal{Hol}}
\newcommand{\fol}{\mathcal{F}}

\newcommand{\GPD}{\textsf{GPD}}
\newcommand{\Eff}{\textnormal{Eff}}

\newcommand{\Gg}{\mathscr{G}}
\newcommand{\Gh}{\mathscr{H}}
\newcommand{\Gk}{\mathscr{K}}

\newtheorem{Theorem}{Theorem}[section]
\newtheorem{Proposition}[Theorem]{Proposition}
\newtheorem{Lemma}[Theorem]{Lemma}

\newtheorem{Definition}[Theorem]{Definition}
\newtheorem{Example}[Theorem]{Example}
\newtheorem{Remark}[Theorem]{Remark}

\begin{document}

\begin{abstract}
In this paper, we  generalize the notion of
Serre fibration to the Morita category of topological groupoids and derive
the associated long exact sequence of homotopy groups.  We use this results
for calculation of homotopy groups of various groupoids,  such as
the foliation groupoid of a Riemannian foliation.
\end{abstract}
  
\title{Serre fibrations in the Morita category of topological groupoids}
\author{Bla\v{z} Jelenc}
\address{Institute of Mathematics, Physics and Mechanics,
         University of Ljubljana, Jadranska 19,
         1000 Ljubljana, Slovenia}
\email{blaz.jelenc@imfm.si}
\subjclass[2010]{22A22,53C12,55Q05,57T20,58H05}
\thanks{This work was supported in part by
        the Slovenian Research Agency (ARRS) project J1-2247.}

\maketitle

\section{Introduction}

Topological groupoids are objects that we can use to
represent singular geometric structures.
For example, the leaf space of a foliation may not contain
much information about the foliation, but it is well represented
by the associated holonomy groupoid \cite{IntroFolandLie}.
By reducing this groupoid to a complete
transversal of the foliation, we obtain a different groupoid which
represents the foliation as well as the holonomy groupoid and
is Morita equivalent to the holonomy groupoid.
Morita equivalence is a relation between topological groupoids,
generated by the functors between topological groupoids which are
called weak equivalences. Such functors are, in particular,
equivalences of categories and induce homeomorphisms between the spaces of orbits
of the topological groupoids \cite{Moerdijk1,IntroFolandLie,HilSkan}.
There is the Morita category of topological groupoids
in which two topological groupoids are isomorphic precisely if they are Morita equivalent.
The morphisms in this category can be represented by principal bundles.

It is essential to have a way of distinguishing topological
groupoids that are not Morita equivalent.
There are some interesting classes of topological groupoids that 
are closed under Morita equivalence, such as proper groupoids and
foliation groupoids \cite{IntroFolandLie}. 
A classical way to construct invariants of topological groupoids is based
on the classifying space associated to a topological groupoid,
which is determined uniquely up to homotopy equivalence.
There are basically two standard constructions of this space,
the Milnor's infinite join construction
and geometric realization of the nerve of topological groupoid
\cite{Bott,Dold,Haefliger,FibBund,ClassTopoi,Moerdijk1,DiffGeoFol,Tsuboi}.
One can show that a weak equivalence between topological groupoids induces a weak homotopy
equivalence between the associated classifying spaces.
We can therefore study the homotopy invariants of the classifying space as
the Morita invariants of the topological groupoid.
With this approach, some very interesting results can be obtained.
For  example, the Haefliger theorem tells us that classifying space
of the holonomy  groupoid of a foliation on a manifold $M$ is homotopy equivalent to $M$
if and  only if the holonomy cover of every leaf of the foliation is contractible \cite{HaefligerClass}.
In general, however, the classifying space of a topological groupoid is a complicated space
and can be difficult to understand and use. Furthermore, it can be deprived
of some additional geometric structure which is present on the level of groupoids,
such as the smooth structure of the holonomy groupoid.
An alternative approach is to define Morita invariants of topological groupoids
in a manner that does not directly rely on the classifying space.
For example, one can describe the fundamental group of an \'{e}tale groupoid $\Gg$ in
terms of $\Gg$-paths \cite{HilSkan} or the fundamental group of an orbifold
$Q$ in terms of deck transformations of the universal cover of $Q$
\cite{Thurston} (for higher homotopy groups of orbifolds, see also \cite{Chen}).
  
In this paper we study the homotopy groups of topological groupoids and their
description which is intrinsic in the Morita category  and does not involve
the classifying space. To achieve this, we first generalize
the Morita category of topological groupoids to the Morita category of pairs of 
topological groupoids. In this category, the
$n$-th homotopy group $\pi_n(\Gg,a_0)$
of a topological groupoid $\Gg$ with base point $a_0$
can be described as the set of homotopy classes of
Morita maps from the pair $(I^n,\partial I^n)$ to the pointed topological groupoid
$(\Gg,a_0)$, with natural multiplication induced by concatenation \cite{HilSkan}.
We show that this group is isomorphic to the $n$-th homotopy group of
the associated classifying space.
With our definition, is straightforward that $\pi_{n}$
is a functor, defined on the Morita category of topological groupoids
and therefore a Morita invariant.

One of our objectives is to develop some methods for efficient calculation
of homotopy groups of topological groupoids. We show that for every
Morita map $P$ between pointed topological groupoids $(\Gh,b_0)$ and $(\Gg,a_0)$
there is a natural long exact sequence of groups
$$
\ldots\to
\Sigma_n(P)\to
\pi_n(\Gh,b_0)\stackrel{\pi_n(P)}{\to}
\pi_n(\Gg,a_0)\to
\Sigma_{n-1}(P)\to
\ldots
$$ 
for some groups $\Sigma_n(P)$ depending on $P$ which we describe explicitly in terms
of principal bundles. Furthermore, we generalize the notion of Serre fibration
to the Morita category of topological groupoids
(we emphasize that this definition is intrinsic in the Morita bicategory of topological groupoids).
We show that for a Morita map $P$
that is a Serre fibration, one can identify the groups $\Sigma_n(P)$ with the homotopy
groups of some topological groupoid, which is naturally associated to the Morita map $P$ and
can be viewed as some sort of a fiber of $P$. 
  
In the rest of the paper we apply these results to many
concrete examples. We consider a class of so called Serre groupoids 
(which have the property that their source map is a Serre fibration in the classical sense)
and show that the calculation of homotopy groups of
such groupoids is especially simple.
Examples of Serre groupoids are action groupoids and holonomy
groupoids of some foliations. 
  
It will turn out that there are many functors between topological groupoids
that are Serre fibrations as Morita maps.
For instance, the functor from the holonomy groupoid
of the foliation of transverse principal bundle to the holonomy groupoid of the
foliation of the base space is a Serre fibration.
This turns out to be useful in the calculation of
homotopy groups of the holonomy groupoid of Riemannian foliation on a compact
manifold. Another important example of a Serre fibration is the effect functor $\Eff$
from a proper \'{e}tale groupoid $\Gg$ to the associated proper effective \'{e}tale groupoid
$\Eff(\Gg)$. We will prove that the functor $\Eff$ induces an isomorphism on the $n$-th
homotopy group for $n\geq 3$, whereas the relation between the first and the second
homotopy groups of $\Gg$ and $\Eff(\Gg)$ is expressed in
terms of an exact sequence.

\section{Topological groupoids and principal bundles}\label{section:sec1}
  
\textsl{
In this section we first recall some definitions and facts
of the theory of topological groupoids and principal bundles.
We introduce the Morita category of pairs of topological groupoids and
give the definition of homotopy groups of topological groupoids in terms of
Morita maps. }
\vspace{0.25cm}
  
Throughout this notes, we will say that a (continuous)
map $f:X\to Y$ between topological spaces has local
sections if for any point $y\in Y$ there exists a map
$\sigma:V\to X$, defined on an open neighbourhood $V$ of $y$ in $Y$, such
that $f\circ\sigma=id_V$. It is important to note that any map
with local sections is a quotient map. More generally, a map
of topological pairs $f:(X,A)\to (Y,B)$ has local sections
if for any point $y\in Y$ there exists a map of topological pairs
$\sigma:(V, V\cap B)\to (X,A)$,
defined on an open neighbourhood $V$ of $y$ in $Y$, such
that $f\circ\sigma=id_V$.

\Para{Topological groupoid.} A groupoid $\Gg$ is a small category
such that every arrow in $\Gg$ is invertible. A topological groupoid is a
groupoid $\Gg$ such that  its space of arrows $\Gg_1$ and its space of
objects $\Gg_0$ are both  topological spaces and all of the structure maps of
$\Gg$ are continuous. Note that, in particular, the source map
$s:\Gg_{1}\to\Gg_{0}$ and the target map
$t:\Gg_{1}\to\Gg_{0}$ both have local sections - in fact,
they both have a global section
$\textnormal{uni}:\Gg_{0}\to\Gg_{1}$, which maps an object
$x$ of $\Gg$ to the unit arrow $1_{x}$ at $x$.
A topological groupoid $\Gg$ is often denoted also by
$(\Gg_1\rightrightarrows \Gg_0)$.

First examples of topological groupoids are topological spaces:
any topological space $X$ can be viewed as a (unit) topological groupoid
$(X\rightrightarrows X)$. Any topological group $G$ is a topological groupoid
with only one object, thus $(G\rightrightarrows \ast)$.
The product of topological groupoids $\Gg$ and $\Gh$
is the topological groupoid $((\Gg_1\times\Gh_1)\rightrightarrows(\Gg_0\times\Gh_0))$.

\Para{Groupoid action.} Let $\Gg$ be a topological groupoid and $X$ a
topological space. A right $\Gg$-action on $X$ along a map $\epsilon:X\to \Gg_0$
is a map $\mu:X\times_{\Gg_0}\Gg_1\to X$ (we write $\mu(x,g)=xg$),
defined on the pullback
$X\times_{\Gg_0}\Gg_1=\left\{(x,g)|\epsilon(x)=t(g)\right\}$,
such that
\begin{enumerate}
  \item [(i)]   $\epsilon(xg)=s(g)$,
  \item [(ii)]  $x 1_{\epsilon(x)}=x$, and
  \item [(iii)] $(xg)g'=x(gg')$ for any $x\in X$, $g,g'\in \Gg_1$ with
                $\epsilon(x)=t(g)$ and $s(g)=t(g')$.
\end{enumerate}
The space of orbits of such an action is denoted by $X/\Gg$.
For such an action $\mu$ we have the associated translation groupoid
$X\rtimes\Gg=(X\times_{\Gg_0}\Gg_1 \rightrightarrows X)$, for which
the source map is $\mu$ and the target map is the first projection.
The multiplication $X\rtimes\Gg$ is determined by
$(x,g)(x',g')=(x,gg')$.

Analogously, one defines a left action of a topological groupoid
$\Gh$ on $X$ and the associated translation groupoid $\Gh\ltimes X$.

\Para{Principal bundle.} Let $\Gg$ and $\Gh$ be topological groupoids. A
$\Gg$-principal bundle over $\Gh$ is a space $P$, equipped with a left
$\Gh$-action along a map $\pi:P\to \Gh_0$ and
a right $\Gg$-action along
a map $\epsilon:P\to \Gg_0$
such that
\begin{enumerate}
\item [(i)]   the map $\pi:P\to \Gh_0$ has local sections,
\item [(ii)]  for every $p\in P$ and $g\in \Gg_1$ with $\epsilon(p)=t(g)$  we have $\pi(pg)=\pi(p)$,
\item [(iii)] for every $p\in P$ and $h\in \Gh_1$ with $s(h)=\pi(p)$  we have $\epsilon(hp)=\epsilon(p)$,
\item [(iv)]  the actions of $\Gh$ and $\Gg$ on $P$ commute with each other,
              i.e. $h(pg)=(hp)g$ for every $h\in\Gh_1$, $p\in P$ and $g\in\Gg_1$ with
              $s(h)=\pi(p)$ and $\epsilon(p)=t(g)$, and
\item [(v)]   the map $\mu:P\times_{\Gg_0}\Gg_1\to P\times_{\Gh_0} P$, $(x,g)\mapsto(x,xg)$,
              is a homeomorphism.
\end{enumerate}
Any principal $\Gg$-bundle over $\Gh$ induces a map
$\vartheta:P\times_{\Gh_0} P\to \Gg_1$, called the translation map,
uniquely determined with the 
property that $p'=p\,\vartheta(p,p')$.
Also note that the space $\Gh_0$
is homeomorphic to the orbit space $P/\Gg$,
because the map $\pi$ has local sections and is therefore quotient map.
  
We will say that a principal $\Gg$-bundle $P$ over $\Gh$ is numerable, if the
map $\pi:P\to\Gh_0$ has sections over a numerable covering of $\Gh_0$.
Recall from \cite{Dold,FibBund} that a covering of a topological space is
numerable if it is a open covering with subordinated locally finite partition
of unity.

Let $P$ and $P'$ be  principal $\Gg$-bundles over $\Gh$. A map $\varphi:P\to
P'$ is a morphism of principal $\Gg$-bundles over $\Gh$ if
$\epsilon=\epsilon'\circ\varphi$, $\pi=\pi'\circ\varphi$ and $\varphi$ is
$\Gh$-$\Gg$-equivariant, namely
$\varphi(hpg)=h\varphi(p)g$
for every $h\in\Gh_1$, $p\in P$ and $g\in\Gg_1$ with $s(h)=\pi(p)$
and $\epsilon(p)=t(g)$. Any morphism of principal $\Gg$-bundles
over $\Gh$ is a homeomorphism (thus an isomorphism) \cite[p.165]{PoissonGeo}.
  
For a principal $\Gg$-bundle $P$ over $\Gh$ and a principal $\Gh$-bundle $Q$
over $\Gk$, there is a well defined tensor product $Q\otimes P=Q\otimes_{\Gh} P$ which
is a principal $\Gg$-bundle over $\Gk$.
The space $Q\otimes P$ is the space of
orbits of the diagonal $\Gh$-action on the fibered product
$Q\times_{\Gh_0}P$ \cite{PoissonGeo,HilSkan}.
The Morita category
$$\GPD $$
of topological
groupoids is the category with topological groupoids as objects and
isomorphism classes of principal bundles as morphisms,
and with composition induced by the tensor product.
(Considering the principal bundles, rather than their isomorphism classes,
as $1$-morphisms, and morphisms of principal bundles as $2$-morphisms,
one obtains the Morita bicategory of topological groupoids.)
The category of topological spaces
can be regarded as a full subcategory of the Morita category $\GPD$
in the obvious way.
  
\Para{Pairs of topological groupoids.} A pair of topological groupoids is a
pair $(\Gg,\Gg')$ such that $\Gg$ is a topological groupoid
and $\Gg'$ a subgroupoid of $\Gg$.
Let $(\Gh,\Gh')$ and $(\Gg,\Gg')$ be pairs of topological groupoids.
A principal $(\Gg,\Gg')$-bundle
over $(\Gh,\Gh')$ is a pair of topological spaces $(P,P')$
such that $P$ is a principal $\Gg$-bundle over $\Gh$,
$P'$ is a principal $\Gg'$-bundle over $\Gh'$ with respect to the structure
given by the restriction of the structure of $P$ to $P'\subset P$
and the map of topological pairs $\pi:(P,P')\to (\Gh_0,\Gh'_0)$
has local sections.

We will say that the principal $(\Gg,\Gg')$-bundle
$(P,P')$ over $(\Gh,\Gh')$ is numerable if the map $\pi:(P,P')\to(\Gh_0,\Gh'_0)$
has sections (as maps of topological pairs) over a numerable covering. 

A morphism $\phi$ from a principal $(\Gg,\Gg')$-bundle $(P,P')$ over
$(\Gh,\Gh')$ to a principal $(\Gg,\Gg')$-bundle $(R,R')$ over
$(\Gh,\Gh')$ is a morphism $\phi:P\to R$
of principal $\Gg$-bundles over $\Gh$
such that $\phi(P')\subset R'$.
 
Let $(P,P')$ be a principal $(\Gg,\Gg')$-bundle over $(\Gh,\Gh')$
and $(Q,Q')$ a principal $(\Gh,\Gh')$-bundle over $(\Gk,\Gk')$.
Then the tensor product
$(Q,Q')\otimes (P,P')=(Q\otimes P,Q'\otimes P')$
is a principal $(\Gg,\Gg')$-bundle
over $(\Gk,\Gk')$ (note that we can naturally identify $Q'\otimes P'$
with a subset of $Q\otimes P$). 

The Morita category
$$\GPD^2 $$
of pairs of topological
groupoids is the category with pairs of topological groupoids as objects and
isomorphism classes of principal bundles as morphisms,
and with composition induced by the tensor product.
The category of pairs of topological spaces
can be regarded as a full subcategory of the Morita category $\GPD^2$.

\Para{Pointed and marked topological groupoids.} Let $\Gg$ be a topological groupoid
and $A$ a subset of $\Gg_0$.
We can view the  set $A$ as a  unit subgroupoid of $\Gg$, namely the
subgroupoid $A=\textnormal{uni}(A)\subset \Gg$.
In this view, the pair $(\Gg,A)$ is a pair of topological groupoids,
called a marked topological groupoid. If $A$ is a singleton set $\left\{a_0\right\}$,
then the marked topological groupoid $(\Gg,\left\{a_0\right\})$
is denoted simply by $(\Gg,a_0)$ and called a pointed topological groupoid.
If $(P,P')$ is a principal $(\Gg,A)$-bundle over $(\Gh,\Gh')$,
then we will rather see $P'$ as a section $\sigma$ of $\pi:P\to\Gh_0$ over $\Gh_0'$,
and write $(P,P')=(P,\sigma)$. 

The full subcategory of $\GPD^2$, consisting of all marked
topological groupoids, is the Morita category of marked topological groupoids
and denoted by
$$\GPD_{\scriptscriptstyle\square}.$$
The full subcategory of $\GPD_{\scriptscriptstyle\square}$, consisting of all pointed
topological groupoids, is the Morita category of pointed topological groupoids
and denoted by
$$\GPD_\circ.$$
The category of pairs of topological spaces
is a full subcategory of the Morita category $\GPD_{\scriptscriptstyle\square}$,
while the category of pointed topological spaces is a full subcategory of
the Morita category $\GPD_\circ$.

\Para{Morita equivalence.}
A functor $\phi:(\Gh,b_0)\to(\Gg,a_0)$ between pointed 
groupoids gives us an associated principal $(\Gg,a_0)$-bundle
$(\left\langle \phi\right\rangle,(b_0,1_{a_0}))$
over $(\Gh,b_0)$, given as the pullback
$$
\left\langle \phi\right\rangle=\Gh_0\times_{\Gg_0}\Gg_1=\left\{(y,g)|\phi(y)=t(g)\right\}.
$$
The space $\left\langle \phi\right\rangle$ has a right
$\Gg$-action along the map
$s\circ\textnormal{pr}_2:\left\langle \phi\right\rangle\to\Gg_0$
and a left $\Gh$-action along the map
$\textnormal{pr}_1:\left\langle \phi\right\rangle\to\Gh_0$
given in the obvious way.
The functor $\phi$ is a weak equivalence if the map
$s\circ pr_2:\left\langle \phi\right\rangle\to \Gg_0$
is a map with local sections and the diagram
$$
  \xymatrix{
  \Gh_1\ar[r]^-\phi\ar[d]_-{(s,t)}& \Gg_1\ar[d]^-{(s,t)}\\
  \Gh_0\times \Gh_0\ar[r]_-{\phi\times\phi}& \Gg_0\times \Gg_0
  }
$$
is a pullback of topological spaces. 
The functor $\phi$ is a weak
equivalence if and only if the isomorphism class of the bundle
$\left\langle\phi\right\rangle$ is invertible in $\GPD$ \cite{HilSkan}.
In this case, the principal $(\Gg,a_0)$-bundle
$(\left\langle \phi\right\rangle,(b_0,1_{a_0}))$
over $(\Gh,b_0)$ is an isomorphism in the Morita category of
pointed topological groupoids $\GPD_\circ$.
  
In general, an isomorphism class of a principal $(\Gg,a_0)$-bundle over
$(\Gh,b_0)$ is called Morita equivalence
if it is an isomorphism in $\GPD_\circ$.

\Para{Homotopy.} We say that two principal $\Gg$-bundles $P$
and $Q$ over $\Gh$ are homotopic,
if there exists a principal $\Gg$-bundle $E$  over
$\Gh\times I$ such that $P$ is isomorphic to the restriction
$E|_{\Gh\times \left\{0\right\}}$ and
$Q$ is isomorphic to the restriction
$E|_{\Gh\times\left\{1\right\}}$.
(The restriction $E|_{\Gh\times \left\{0\right\}}$ 
of $E$ to the subgroupoid $\Gh\times \left\{0\right\}$
of $\Gh\times I$, naturally isomorphic to $\Gh$,
is defined in the obvious way.)

More generally, two principal $(\Gg,\Gg')$-bundles $(P,P')$
and $(Q',Q')$ over $(\Gh,\Gh')$ are homotopic,
if there exists a principal $(\Gg,\Gg')$-bundle $(E,E')$  over
$(\Gh\times I,\Gh'\times I)$,
such that $(P,P')$ is isomorphic to the restriction
$(E|_{\Gh\times \left\{0\right\}},E'|_{\Gh'\times \left\{0\right\}})$ and
$(Q,Q')$ is isomorphic to the restriction
$(E|_{\Gh\times \left\{1\right\}},E'|_{\Gh'\times \left\{1\right\}})$.

\begin{Proposition}\label{prop:homotopy}
Homotopy is an equivalence relation on the set of principal
$(\Gg,\Gg')$-bundles over $(\Gh,\Gh')$.
\end{Proposition}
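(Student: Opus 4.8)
The plan is to verify reflexivity, symmetry and transitivity in turn, the last being the only substantial step. For reflexivity I would use the constant homotopy: given a principal $(\Gg,\Gg')$-bundle $(P,P')$ over $(\Gh,\Gh')$, set $E=P\times I$ and $E'=P'\times I$, with $\epsilon_E(p,t)=\epsilon(p)$, $\pi_E(p,t)=(\pi(p),t)$, right action $(p,t)g=(pg,t)$ and left action $(h,t)(p,t)=(hp,t)$. Conditions (i)--(v) for $(E,E')$ over $(\Gh\times I,\Gh'\times I)$ follow immediately from those for $(P,P')$; in particular $\pi_E$ has local sections of pairs, obtained from those of $\pi$ by taking the product with $\mathrm{id}_I$. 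Since $E|_{\Gh\times\{0\}}$ and $E|_{\Gh\times\{1\}}$ both equal $(P,P')$, this shows $(P,P')$ is homotopic to itself.

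For symmetry, let $(E,E')$ be a homotopy from $(P,P')$ to $(Q,Q')$. The self-homeomorphism $r(t)=1-t$ of $I$ induces an isomorphism $\mathrm{id}_\Gh\times r$ of the groupoid $\Gh\times I$, and reindexing $(E,E')$ by it produces a principal $(\Gg,\Gg')$-bundle over $(\Gh\times I,\Gh'\times I)$ with the two end restrictions interchanged, hence a homotopy from $(Q,Q')$ to $(P,P')$.

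For transitivity, suppose $(E,E')$ realizes $(P,P')\sim(Q,Q')$ and $(F,F')$ realizes $(Q,Q')\sim(R,R')$. I would first make both homotopies constant near their endpoints. Fix a continuous $c:I\to I$ with $c\equiv0$ on $[0,\tfrac13]$ and $c\equiv1$ on $[\tfrac23,1]$, and replace $(E,E')$ by the bundle with total space $\{(x,t,e)\in\Gh_0\times I\times E:\pi_E(e)=(x,c(t))\}$ and the evident structure. This is again a principal $(\Gg,\Gg')$-bundle over $(\Gh\times I,\Gh'\times I)$ with the same end restrictions, but now over $\Gh\times[\tfrac23,1]$ it is the product bundle $(Q\times[\tfrac23,1],Q'\times[\tfrac23,1])$, and similarly a product of $(P,P')$ near $t=0$. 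After doing the same to $(F,F')$, I would rescale the first homotopy onto $\Gh\times[0,\tfrac12]$ and the second onto $\Gh\times[\tfrac12,1]$ and glue the two total spaces along $\Gh\times\{\tfrac12\}$ by the composite isomorphism $E|_{\Gh\times\{1\}}\cong(Q,Q')\cong F|_{\Gh\times\{0\}}$, obtaining a pair $(G,G')$ whose restrictions at $0$ and $1$ are $(P,P')$ and $(R,R')$.

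The main obstacle is to verify that $(G,G')$ is genuinely a principal $(\Gg,\Gg')$-bundle over $(\Gh\times I,\Gh'\times I)$: that the glued total space carries continuous, well-defined $\Gg$- and $(\Gh\times I)$-actions satisfying (i)--(v), and, above all, that its anchor map to $\Gh_0\times I$ has local sections of pairs across the seam $\Gh_0\times\{\tfrac12\}$. This is exactly what the reparametrization secures: near the seam both pieces are the product bundle on $(Q,Q')$, glued by an isomorphism of $(Q,Q')$, so on a neighborhood of the seam $(G,G')$ is isomorphic to a product $(Q\times J,Q'\times J)$, where the required local sections come directly from those of $(Q,Q')$ and conditions (i)--(v) reduce to the corresponding properties of $(Q,Q')$. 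Away from the seam $(G,G')$ coincides on an open set with one of the two rescaled homotopies, so nothing further is needed there. Granting this, $(G,G')$ witnesses $(P,P')\sim(R,R')$, and the proof is complete.
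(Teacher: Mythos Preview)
Your proof is correct and follows essentially the same strategy as the paper's. Both arguments identify the same obstacle---that a naive gluing of $(E,E')$ and $(F,F')$ along the common restriction need not produce a bundle whose anchor map has local sections across the seam---and both resolve it by arranging that the bundle is a product $(Q\times J,Q'\times J)$ in a neighborhood of the seam. The paper does this by explicitly inserting a product cylinder $(P\times I,P'\times I)$ between the two homotopies (so there are two seams, each with a product on one side), while you achieve the same effect by reparametrizing each homotopy to be constant near its endpoints before gluing at a single seam; these are equivalent maneuvers. Your treatment is in fact somewhat more explicit than the paper's, since you also spell out reflexivity and symmetry and articulate precisely why the product structure near the seam yields the required local sections of pairs.
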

  
\begin{proof}
To prove that the relation is transitive,
let $(E,E')$ be a
principal $(\Gg,\Gg')$-bundle over $(\Gh\times I,\Gh'\times I)$, $(F,F')$
a principal $(\Gg,\Gg')$-bundle over $(\Gh\times I,\Gh'\times I)$
and $(P,P')$ a principal $(\Gg,\Gg')$-bundle over $(\Gh,\Gh')$
which is isomorphic to both
$(E|_{\Gh\times \left\{1\right\}},E'|_{\Gh'\times\left\{1\right\}})$ and
$(F|_{\Gh\times \left\{0\right\}},F'|_{\Gh'\times\left\{0\right\}})$.
We cannot directly glue the bundles $(E,E')$ and $(F,F')$
together along the isomorphism
between the restrictions 
$(E|_{\Gh\times \left\{1\right\}},E'|_{\Gh'\times\left\{1\right\}})$ and
$(F|_{\Gh\times \left\{0\right\}},F'|_{\Gh'\times\left\{0\right\}})$
because the resulting bundle may not have local sections (although for many
special classes of topological groupoids, this can in fact not happen).
Therefore we proceed in the following way.
We use the principal $(\Gg,\Gg')$-bundle $(P\times I,P'\times I)$
over $(\Gh\times I,\Gh'\times I)$, glue this bundle
with $(E,E')$ along the isomorphism
between $(E|_{\Gh\times \left\{1\right\}},E'|_{\Gh'\times\left\{1\right\}})$
and $(P\times \{0\},P'\times \{0\})$ and with
$(F,F')$ along the isomorphism
between $(F|_{\Gh\times \left\{0\right\}},F'|_{\Gh'\times\left\{0\right\}})$
and $(P\times \{1\},P'\times \{1\})$.
After reparametrization, we obtain
a principal $(\Gg,\Gg')$-bundle over $(\Gh\times I,\Gh'\times I)$ which gives
a homotopy between 
$(E|_{\Gh\times \left\{0\right\}},E'|_{\Gh'\times\left\{0\right\}})$ and
$(F|_{\Gh\times \left\{1\right\}},F'|_{\Gh'\times\left\{1\right\}})$.
\end{proof}

\begin{Remark} \rm
Observe that if the homotopies in the proof above are given by numerable
principal bundles $(E,E')$ and $(F,F')$, then the resulting concatenated homotopy
is again numerable.
\end{Remark}

\Para{Classifying space of a topological groupoid.}
Let $\Gg$ be a topological groupoid. Let us first recall the Milnor infinite join
construction of
the right $\Gg$-space $E\Gg$,
that is 
$$
E\Gg=\left\{(t_1g_1,t_2g_2,\ldots) | t_i\in \left[0,1\right],g_i\in \Gg_1,\sum
t_i=1,s(g_1)=s(g_2)=\cdots\right\}
$$
where only finitely many $t_i$ are non-zero
and we have a relation that $0g=0g'$ even if $g\neq g'$.
For further details see \cite[p.140]{Haefliger}. 

The space $E\Gg$ is a right $\Gg$-space along the map
$\bar{\epsilon}:E\Gg\to \Gg_0$,
$$
\bar{\epsilon}(t_1g_1,t_2g_2,\ldots)=s(g_i)
$$
with respect to the right $\Gg$ action
$$
(t_1g_1,t_2g_2,\ldots)g=(t_1(g_1g),t_2(g_2g),\ldots)
$$
for any $g\in\Gg$ with $t(g)=\bar{\epsilon}(t_1g_1,t_2g_2,\ldots)$.
The classifying space $B\Gg$ of the groupoid $\Gg$ is the space of orbits
if this action,
$$ B\Gg=E\Gg/\Gg .$$
It is known (see \cite{Dold,Haefliger,FibBund}) that
the quotient projection
$\pi:E\Gg\to B\Gg$ is the universal numerable principal $\Gg$-bundle over $B\Gg$,
thus the numerable
principal $\Gg$-bundles over a space $B$ are in bijective correspondence  (via
pullback) with homotopy classes of maps from $B$ to $B\Gg$.
    
We can generalize
the notion of the universal numerable principal bundle to
pairs of topological groupoids. Again, using Milnor's infinite join
construction, we get a bundle $(E\Gg,E\Gg')\to(B\Gg,B\Gg')$.
(The inclusion $i:\Gg'\to\Gg$ induces an inclusion $i:E\Gg'\to E\Gg$,
so we can view $E\Gg'$ as a subspace of $E\Gg$ in a natural way.)
We also have
the induced injective map $i:B\Gg'\to B\Gg$. Let us inspect the following
diagram:
$$
 \xymatrix{
 i(E\Gg')\ar[d]_-\pi\ar[r]&E\Gg'\ar[d]^-{\pi'}\\
 i(B\Gg')\ar[r]&B\Gg'
 }
 $$
The upper map is a homeomorphism and both vertical maps are quotient maps (as they
have local sections). Therefore, the lower map $i(B\Gg')\to B\Gg'$ is
continuous. This shows that $i:B\Gg'\to B\Gg$ is an embedding, so  we can view
$B\Gg'$ as a subspace of $B\Gg$.

Using basically the same arguments as in \cite[p.57]{FibBund} we see that
the homotopy classes of numerable principal $(\Gg,\Gg')$-bundles over topological
pair $(B,A)$ are in bijective correspondence (via pullback) with the homotopy
classes of maps $(B,A)\to(B\Gg,B\Gg')$.

All of the above can be in an obvious way generalized to $n$-tuples
$(\Gg,\Gg',\Gg'',\ldots)$ of topological groupoids.

\Para{Homotopy groups.}
Let $(\Gg,a_0)$ be a pointed topological groupoid and let $n\in \NN$.
We define $\pi_n(\Gg,a_0)$ to be the set of
homotopy classes of principal $(\Gg,a_0)$-bundles over $(I^n,\partial I^n)$,
$$ \pi_n(\Gg,a_0)=\left[\GPD_{\scriptscriptstyle\square}((I^n,\partial I^n),(\Gg,a_0))\right].$$
For $n\geq 1$, the set $\pi_n(\Gg,a_0)$ is in fact a group with respect
to the multiplication given by the concatenation.
Indeed, if $(P,\sigma)$ and $(P',\sigma')$ are two
principal $(\Gg,a_0)$-bundles over $(I^n,\partial I^n)$,
there is an uniquely determined isomorphism $h$ between the restrictions
$P|_{\left\{1\right\}\times I^{n-1}}$ and $P'|_{\left\{0\right\}\times I^{n-1}}$
which respects the sections $\sigma$ and $\sigma'$.
We glue the bundles $P$ and $P'$ along the isomorphism $h$ to obtain
a principal $(\Gg,a_0)$-bundle over $(I^n,\partial I^n)$, after reparametrization of
the base space.
With this concatenation operation the set $\pi_n(\Gg,a_0)$ becomes
a group, called the $n$-th homotopy group of the pointed topological groupoid $(\Gg,a_0)$.
Sometimes (for example, when the base point is clear from the context)
we write simply $\pi_n(\Gg,a_0)=\pi_n(\Gg)$.

Homotopy groups of pointed topological groupoids are generalizations of
the classical homotopy groups of topological spaces, since 
clearly we have $\pi_n(X,x_0)=\pi_n(X\rightrightarrows X,x_0)$.
In the same way as for homotopy groups of topological spaces, one can see that
the homotopy groups of pointed topological groupoids $\pi_n(\Gg,a_0)$ are
abelian for $n\geq 2$.   
  
Any principal $(\Gg,a_0)$-bundle $(P,p_0)$ over $(\Gh,b_0)$ induces a homomorphism (via
composition) $\pi_{n}(P,p_0):\pi_n(\Gh,b_0)\to\pi_n(\Gg,a_0)$. Furthermore,
this defines functors
$$ \pi_{0}:\GPD_{\circ}\to \textsf{Sets}_{\circ} $$
$$ \pi_{1}:\GPD_{\circ}\to \textsf{Grp} $$
and
$$ \pi_{n}:\GPD_{\circ}\to \textsf{Ab} $$
for $n\geq 2$,
where $\textsf{Sets}_{\circ}$, $\textsf{Grp}$ and $\textsf{Ab}$ stand
for the category of pointed sets, groups and abelian groups respectively.
By definition, all this functors are Morita invariants.
By the abuse of notation, we often write $\pi_{n}(P,p_0)=\pi_{n}(P)$.

One can easily check that the following theorem holds:
  
\begin{Theorem}\label{Theorem:th1}
Let $(P,p_0)$ and $(Q,q_0)$ be homotopic principal $(\Gg,a_0)$-bundles over
$(\Gh,b_0)$. Then $(P,p_0)$ and $(Q,q_0)$ induce the same homomorphism from
$\pi_n(\Gh,b_0)$ to $\pi_n(\Gg,a_0)$.    
\end{Theorem}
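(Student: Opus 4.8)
The plan is to reduce the statement to a single fact about tensor products: tensoring a homotopy in the second variable with a fixed bundle in the first variable again yields a homotopy. First I would unwind the definition of the induced homomorphism. By construction, $\pi_n(P,p_0)$ sends the class of a principal $(\Gh,b_0)$-bundle $(R,\rho)$ over $(I^n,\partial I^n)$ to the class of the composite in $\GPD_{\scriptscriptstyle\square}$, which in terms of the tensor product is $(R,\rho)\otimes(P,p_0)$, a principal $(\Gg,a_0)$-bundle over $(I^n,\partial I^n)$; likewise $\pi_n(Q,q_0)$ sends it to the class of $(R,\rho)\otimes(Q,q_0)$. Hence it suffices to show that for every such $(R,\rho)$ the bundles $(R,\rho)\otimes(P,p_0)$ and $(R,\rho)\otimes(Q,q_0)$ are homotopic, for then they represent the same element of $\pi_n(\Gg,a_0)$ and the two homomorphisms agree.

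To produce this homotopy I would use the given one. By hypothesis there is a principal $(\Gg,a_0)$-bundle $(E,E')$ over $(\Gh\times I,\{b_0\}\times I)$ whose restrictions to $\Gh\times\{0\}$ and $\Gh\times\{1\}$ are $(P,p_0)$ and $(Q,q_0)$. I would first verify that the product $(R\times I,\rho\times\mathrm{id}_I)$ is a principal $(\Gh\times I,\{b_0\}\times I)$-bundle over $(I^n\times I,\partial I^n\times I)$, where $\Gh\times I$ acts by $(r,s)(g,s)=(rg,s)$, the projection is $\pi_R\times\mathrm{id}_I$, and the section is $(x,s)\mapsto(\rho(x),s)$; each bundle axiom, including the local-section condition for the pair, follows directly from the corresponding one for $(R,\rho)$ after taking products with $I$. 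Then I would form the tensor product
$$ (R\times I,\rho\times\mathrm{id}_I)\otimes(E,E'), $$
which, by the general construction of the tensor product of pairs, is a principal $(\Gg,a_0)$-bundle over $(I^n\times I,\partial I^n\times I)$ and so is a candidate homotopy between two $(\Gg,a_0)$-bundles over $(I^n,\partial I^n)$.

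It remains to identify its restrictions to the two ends. The crucial observation is that restricting the base to $I^n\times\{0\}$ amounts to restricting the structure groupoid $\Gh\times I$ to $\Gh\times\{0\}$, so that the tensor product restricts as
$$ \bigl((R\times I)\otimes E\bigr)\big|_{I^n\times\{0\}}\cong\bigl(R\times I\bigr)\big|_{I^n\times\{0\}}\otimes E|_{\Gh\times\{0\}}\cong R\otimes P, $$
and symmetrically $R\otimes Q$ at $s=1$; the markings match because $E'$ restricts to $p_0$ at $s=0$ and to $q_0$ at $s=1$. I expect the main obstacle to be exactly this compatibility of restriction with the tensor product, which I would verify by realizing $(R\times I)\otimes E$ as the orbit space of the fibered product $(R\times I)\times_{\Gh_0\times I}E$ under the diagonal $\Gh\times I$-action and checking that passing to the slice $s=0$ commutes with forming this fibered product and its quotient. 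Granting this naturality, the displayed bundle is a homotopy from $(R,\rho)\otimes(P,p_0)$ to $(R,\rho)\otimes(Q,q_0)$, which is what the reduction requires.
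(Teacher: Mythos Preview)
Your approach is correct and is exactly the natural argument one would carry out; the paper itself does not give a proof but only remarks that the theorem is easy to check. Your construction of $(R\times I,\rho\times\mathrm{id}_I)\otimes(E,E')$ as a principal $(\Gg,a_0)$-bundle over $(I^n\times I,\partial I^n\times I)$, together with the identification of its restrictions at the two ends, is precisely this easy check.
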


\begin{Example} \rm
(1)
Let $\textnormal{Pair}(X)=(X\times X\to X)$
be the pair groupoid over a pointed topological space $(X,x_0)$,
in which the source and the target
maps are the projections. This groupoid is weakly equivalent
to the space with only one point $x_0$, which means that
$\pi_n(\textnormal{Pair}(M))=0$ for any $n$.

(2)
Let $p:N\to M$ be a surjective submersion and $N\times_M N\rightrightarrows N$
the associated groupoid. Then the natural functor $\Phi$
from $N\times_M N\rightrightarrows N$ to $M\rightrightarrows M$ is a Morita
equivalence, which follows from \cite[p.128]{IntroFolandLie} and the fact that
the map $\left\langle \Phi \right\rangle\to M$ has local sections. This
gives us isomorphisms
$$
\pi_n(N\times_M N\rightrightarrows N,n_0)\cong\pi_n(M\rightrightarrows M,p(n_0))
\cong\pi_n(M,p(n_0))
$$
for any $n\in\NN$.

\end{Example}

\Para{Base point change.}
Let $\Gg$ be a topological groupoid
and suppose that $a_0,a_1\in \Gg_0$
are connected by a path in $\Gg_0$. Then, using identical argument as in the case of
homotopy groups of topological spaces, we get an isomorphism
between $\pi_n(\Gg,a_0)$ and $\pi_n(\Gg,a_1)$.

Now suppose that there is an arrow $g\in \Gg_1$ from $a_1\in \Gg_0$ to $a_0\in
\Gg_0$.  Then the principal $(\Gg,a_0)$-bundle $(\Gg_1,g)$ over
$(\Gg,a_1)$ (with $\pi=t:\Gg_1\to\Gg_0$ and $\epsilon=s:\Gg_1\to\Gg_0$)
is a Morita equivalence, so it induces an isomorphism between $\pi_n(G,a_0)$ and
$\pi_n(G,a_1)$.
  
It follows that if the topological groupoid $\Gg$ is $\Gg$-connected
(i.e. for every $a_0,a_1\in \Gg_0$ there is a $\Gg$-path from $a_0$ to $a_1$, see
\cite[p.28]{HilSkan}), then the groups $\pi_n(\Gg,a_0)$ and $\pi_n(\Gg,a_1)$
are isomorphic.
  
\Para{Homotopy groups of the classifying space.}  From the classification of
principal bundles we see that the homotopy groups $\pi_n(\Gg,a_0)$
of a pointed topological groupoid $(\Gg,a_0)$
are isomorphic to the homotopy groups of the
classifying space $B\Gg$.

\Para{Totally numerable principal bundles}
We recall from \cite{Dold} the definition of a halo. Let $X$ be a
topological space. A halo over a subset $B\subset X$ is a subset $U\subset X$
such that $B\subset U$ and there is a function $\tau:X\to \left[0,1\right]$
with $\tau|_B=1$ and $\textnormal{supp}(\tau)\subset U$.
  
Let $(\Gg,A)$ and $(\Gh,B)$ be marked topological groupoids and $(P,\sigma)$ a
numerable principal $(\Gg,A)$-bundle over $(\Gh,B)$. The bundle $(P,\sigma)$ is
totally numerable if the section $\sigma:B\to P$ can be extended to a halo around $B$.

We observe that any principal $(\Gg,a_0)$-bundle $(P,\sigma)$
over $(I^n,\partial I^n)$ is homotopic to a totally numerable principal
$(\Gg,a_0)$-bundle over $(I^n,\partial I^n)$.
Indeed, to see this, choose a relative homotopy
$H_t:(I^n\times I,\partial I^n\times I)\to(I^n,\partial I^n)$
from the identity to a map
which retracts an open neighbourhood of $\partial I^{n}$ in $I^n$
to $\partial I^n$. When we take the pullback of
$(P,\sigma)$ along the map $H$, we get a homotopy between the bundle
$(P,\sigma)$ and a totally numerable principal $(\Gg,a_0)$-bundle over
$(I^n,\partial I^n)$. 

We can in fact generalize the above statement about totally numerable bundles.
Let $P$ be a principal $\Gg$-bundle over $I^n$ and $\sigma:\partial I^n\to P$ a
section over $\partial I^n$ such that $\epsilon(\sigma(b))=a_0$ for every
$b\in\partial I^n$. The pair $(P,\sigma)$ may not be a principal $(\Gg,a_0)$-bundle
over $(I^n,\partial I^n)$ according to our definition, because we
the section $\sigma$ may not have local extensions. However, by using the same
argument and homotopy $H$ as above, we can see that such a
bundle $P$ with a section $\sigma$
is homotopic, via a homotopy with a suitable section,
to a totally numerable principal $(\Gg,a_0)$-bundle over
$(I^n,\partial I^n)$.
Similarly, any homotopy between principal $\Gg$-bundles over $I^n$
with sections over $\partial I^{n}$, equipped with a suitable section
over $\partial I^n\times I$,
can be transformed into a homotopy that is itself a numerable principal
$(\Gg,a_0)$-bundle over $(I^n\times I,\partial I^n\times I)$. 
It follows that the elements of the group $\pi_n(\Gg,a_0)$ can be
viewed as the homotopy classes
of principal $\Gg$-bundles $P$ over $I^n$, equipped with
a section $\sigma:\partial I^n\to P$ with $\epsilon(\sigma(b))=a_0$
for any $b\in\partial I^n$.

\section{Serre fibrations}

\textsl{
In Section~\ref{section:sec1} we described some basic  properties of homotopy
groups of  pointed topological groupoids. In this section, we first show that
any Morita map $P$ between pointed topological groupoids induces a long exact
sequence that links the homotopy groups of the pointed topological groupoids
and certain groups $\Sigma_n(P)$, which we describe explicitly and play the role of
homotopy groups of ``the homotopy fiber'' of the Morita map $P$.
Furthermore, we define what it means for a Morita map
from groupoid $\Gh$ to $\Gg$ to be a Serre
fibration. We show that if a Morita map $P$ between pointed
topological groupoids $(\Gh,b_0)$ and  $(\Gg,a_0)$ is a Serre fibration,
then the groups $\Sigma_n(P)$ can be identified as the homotopy groups
of a pointed topological groupoid, namely the fiber of $P$.}
\vspace{0.25cm}
    
We know that for an ordinary map between topological spaces, one has the
homotopy fiber of that map. Using this homotopy fiber, one can show that every
map between topological spaces fits into a long exact sequence of homotopy
groups (see \cite[p.407]{AT}).

We will now give a similar construction for  principal
$(\Gg,a_0)$-bundle $(P,p_0)$ over $(\Gh,b_0)$, where $(\Gg,a_0)$ and
$(\Gh,b_0)$ are given pointed topological groupoids.
Write $L^{n+1}\subset \partial I^{n+1}$ for the face of $I^{n+1}$
determined by the equation $t_{n+1}= 1$, where $t_{n+1}$ denotes the last
coordinate on $I^{n+1}=[0,1]^{n+1}\subset \RR^{n+1}$.
Denote by $J^{n+1}$ the union of all the remaining  faces of $I^{n+1}$,
this $J^{n+1}$ is the closure of $\partial I^{n+1}\setminus L^{n+1}$ in $I^{n+1}$.

Let $S_n(P)$ be the set of all
triples $(\alpha,\beta,h)$,
where $\alpha$ is a principal $(\Gh,b_0)$-bundle over $(I^n,\partial I^n)$,
$\beta$ is a principal $(\Gg,a_0)$-bundle over $(I^{n+1},J^{n+1})$
and $h$ is an isomorphism from $\alpha\otimes P$ to $\beta|_{L^{n+1}}$. 
Note that here we identified $L^{n+1}$ with $I^n$ in the canonical way,
and that the sections are implicit in the
definition: for instance, the bundle $\alpha$ is  actually a bundle
$(\alpha,\rho)$, where $\rho$ is a section over $\partial I^n$ and $h$ is an
isomorphism that preserves the sections.

An isomorphism between triples
$(\alpha,\beta,h),(\alpha',\beta',h')\in S_n(P)$
is a pair of isomorphisms
$q:\alpha\to\alpha'$, $Q:\beta\to\beta'$ such that the diagram
$$
\xymatrix{
\alpha\otimes P\ar[d]_-h\ar[r]^-{q\otimes \textnormal{id}} & \alpha'\otimes P\ar[d]^-{h'} \\
\beta|_{L^{n+1}}\ar[r]_-{Q|_{L^{n+1}}} & \beta'|_{L^{n+1}}
}
$$
commutes.
We say that the triples $(\alpha,\beta,h)$ and $(\alpha',\beta',h')$
are homotopic if there is a triple $(A,B,H)$,
where $A$ is a principal $(\Gh,b_0)$-bundle over $(I^n\times I,\partial I^n\times I)$,
$B$ is principal $(\Gg,a_0)$-bundle over $(I^{n+1}\times I,J^{n+1}\times
I$ and $H$ is an isomorphism from $A\otimes P$ to $B|_{{L^{n+1}}\times I}$
such that
$$(A|_{(I^n\times\left\{0\right\},\partial
I^n\times\left\{0\right\})}, B|_{(I^n\times\left\{0\right\},\partial
I^n\times\left\{0\right\})}, H|_{(I^n\times\left\{0\right\},\partial
I^n\times\left\{0\right\})})$$
is isomorphic to $(\alpha,\beta,h)$ and
$$(A|_{(I^n\times\left\{1\right\},\partial
I^n\times\left\{1\right\})},  B|_{(I^n\times\left\{1\right\},\partial
I^n\times\left\{1\right\})}, H|_{(I^n\times\left\{1\right\},
\partial I^n\times\left\{1\right\})})$$
is isomorphic to $(\alpha',\beta',h')$. Using
similar arguments as in the proof of Proposition~\ref{prop:homotopy}
we see that  this gives us an equivalence relation on the set $S_n(P)$.
We denote the set of homotopy classes of triples
in $S_n(P)$ by
$$\Sigma_n(P).$$ 
Concatenation of triples is defined in the same manner as for the
homotopy classes of principal $(\Gg,a_0)$-bundles over $(I^n,\partial I^n)$
and induces a group structure on $\Sigma_n(P)$.

Notice that, similarly to the case of homotopy groups, every triple 
$(\alpha,\beta,h)$ in $S_n(P)$ is homotopic to a numerable
triple, that is, to a triple $(\alpha',\beta',h')$ such that 
$\alpha'$ and $\beta'$ are numerable principal bundles. Furthermore,
as in the case of homotopy groups, we can safely ignore the condition
on local extendability of sections over $\partial I^n$, respectively $J^{n+1}$.

\begin{Theorem}\label{Theorem:LES_general}
Let $(P,p_0)$ be a principal $(\Gg,a_0)$-bundle over $(\Gh,b_0)$. Then there is
a natural long exact sequence
$$
\ldots\to \Sigma_n(P)\to\pi_n(\Gh,b_0)\stackrel{\pi_n(P)}{\to}\pi_n(\Gg,a_0)\to
\Sigma_{n-1}(P)\to\pi_{n-1}(\Gh,b_0)\to\ldots $$
\end{Theorem}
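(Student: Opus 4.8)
The plan is to construct the three families of maps making up the sequence and then verify exactness at each of the three types of terms, imitating the classical long exact sequence of a homotopy fibration with maps of spaces replaced by principal bundles and homotopies of maps replaced by homotopies of bundles. Concretely I would take $\pi_n(P)\colon\pi_n(\Gh,b_0)\to\pi_n(\Gg,a_0)$ to be the homomorphism $[\alpha]\mapsto[\alpha\otimes P]$ already introduced in Section~\ref{section:sec1}; the forgetful map $j_n\colon\Sigma_n(P)\to\pi_n(\Gh,b_0)$, $[(\alpha,\beta,h)]\mapsto[\alpha]$; and the connecting map $\partial_n\colon\pi_n(\Gg,a_0)\to\Sigma_{n-1}(P)$, $[\gamma]\mapsto[(\ast,\gamma,\mathrm{can})]$, where $\ast$ is the trivial $(\Gh,b_0)$-bundle over $(I^{n-1},\partial I^{n-1})$, the bundle $\gamma$ is reread as a bundle over $(I^n,J^n)$ (it is already trivialized on $\partial I^n\supset J^n$), and $\mathrm{can}$ is the canonical isomorphism of the two trivial bundles over $L^n$. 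First I would check that these assignments respect the homotopy relations, using Proposition~\ref{prop:homotopy} and Theorem~\ref{Theorem:th1}, and that they are homomorphisms for the concatenation products. For $\partial_n$ this is immediate, since concatenation of the input $\gamma$ takes place in the first cube coordinate, a $J^n$-direction, and is therefore compatible with triple concatenation.

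Next I would establish exactness at $\pi_n(\Gh,b_0)$, which is essentially formal: after reversing the last coordinate, a bundle $\beta$ over $(I^{n+1},J^{n+1})$ with $\beta|_{L^{n+1}}\cong\alpha\otimes P$ and trivial on $J^{n+1}$ is exactly a nullhomotopy rel $\partial I^n$ of $\alpha\otimes P$, so the existence of a triple $(\alpha,\beta,h)$ with prescribed first component is equivalent to $[\alpha\otimes P]=0$, giving $\operatorname{im}(j_n)=\ker(\pi_n(P))$ at once. Exactness at $\pi_n(\Gg,a_0)$ is where genuine cube combinatorics enter. For $\operatorname{im}(\pi_n(P))\subseteq\ker(\partial_n)$ I would write $\gamma\cong\alpha\otimes P$ and trivialize $(\ast,\alpha\otimes P,\mathrm{can})$ by using $\alpha$ itself as the filler, exactly as in the classical nullhomotopy of $\partial\circ f_*$, obtaining the homotopy $(A,B,H)$ by pulling $\alpha$ and $\gamma$ back along suitable reparametrizations of the cube. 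For $\ker(\partial_n)\subseteq\operatorname{im}(\pi_n(P))$ I would start from a nullhomotopy $(A,B,H)$ of $(\ast,\gamma,\mathrm{can})$: here $A$ is a bundle over $I^{n-1}\times I$ trivial on its whole boundary, so it represents a class $[A]\in\pi_n(\Gh,b_0)$, while $B$ is a bundle over the solid cube $I^n\times I$ trivial on all boundary faces except the two adjacent faces carrying $\gamma$ and $A\otimes P$; folding those two faces into a single $n$-cube reads off the relation $[\gamma]=[A\otimes P]=\pi_n(P)[A]$.

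Exactness at $\Sigma_n(P)$ is the technical heart. Given $(\alpha,\beta,h)$ with $[\alpha]=0$, I would choose a nullhomotopy $A$ of $\alpha$ over $(I^n\times I,\partial I^n\times I)$ and extend $\beta$ to a bundle $B$ over $(I^{n+1}\times I,J^{n+1}\times I)$ together with an isomorphism $H\colon A\otimes P\cong B|_{L^{n+1}\times I}$ extending $h$. The prescribed data for $B$ live on $(I^{n+1}\times\{0\})\cup(\partial I^{n+1}\times I)$, a deformation retract of $I^{n+1}\times I$, so pulling back along a retraction produces $B$ and exhibits $(\alpha,\beta,h)$ as homotopic to a triple with trivial first component whose second component is trivial on all of $\partial I^{n+1}$, that is, to $\partial_{n+1}[\,\beta_1]$ for some $[\beta_1]\in\pi_{n+1}(\Gg,a_0)$; the reverse inclusion $\operatorname{im}(\partial_{n+1})\subseteq\ker(j_n)$ is immediate. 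The main obstacle throughout is not the cube bookkeeping but the gluing of principal bundles along isomorphisms of their restrictions: as already seen in the proof of Proposition~\ref{prop:homotopy}, a naive gluing need not retain local sections of $\pi$. At each gluing I would therefore repeat that interpolation-and-reparametrization device (inserting a product bundle at the seam) and invoke the reduction, noted just after the definition of $\Sigma_n(P)$, to numerable triples for which local extendability of the boundary sections may be ignored. Finally, naturality of the sequence follows because all three maps are built from operations—tensoring with $P$, restriction to faces, and the canonical trivial isomorphisms—that are functorial in $P$ and commute with pullback, so any morphism of the defining data induces a morphism of long exact sequences.
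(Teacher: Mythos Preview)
Your proposal is correct and follows essentially the same strategy as the paper: define the three maps by tensoring with $P$, forgetting to the first component, and sending $\gamma$ to the triple $(\ast,\gamma,\mathrm{can})$, then verify exactness at each term by the standard cube-combinatorics arguments, handling the bundle-gluing issue via the interpolation trick of Proposition~\ref{prop:homotopy}.

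The one organizational difference worth noting is that the paper does not argue directly with $\pi_n(\Gh,b_0)$ in the middle slot. Instead it introduces an auxiliary group $\Upsilon_n(P)$ of homotopy classes of triples $(\alpha,\beta,h)$ in which $\beta$ is required to be trivialized on \emph{all} of $\partial I^{n+1}$ (not just on $J^{n+1}$ as in $\Sigma_n(P)$), and first shows that the forgetful map $\Upsilon_n(P)\to\pi_n(\Gh,b_0)$, $(\alpha,\beta,h)\mapsto\alpha$, is an isomorphism. Exactness is then checked for the sequence $\Sigma_n(P)\to\Upsilon_n(P)\to\pi_n(\Gg,a_0)\to\Sigma_{n-1}(P)$, where the first map is the inclusion of triples and the second is $(\alpha,\beta,h)\mapsto\beta|_{I^n\times\{0\}}$. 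This buys a little uniformity: all three terms are sets of homotopy classes of bundles (or triples of bundles) over cubes, and the connecting maps are literally restriction or inclusion of triples, so the exactness checks reduce to reinterpreting a homotopy of triples as a triple over a bigger cube without having to first unpack an abstract nullhomotopy of $\alpha$ or of $\alpha\otimes P$. Your direct approach reaches the same conclusion with a bit more explicit face-folding at the $\Sigma_n(P)$ and $\pi_n(\Gg,a_0)$ steps; neither route avoids the gluing subtlety, and you handle that correctly.
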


\begin{proof}
Let $Y_n(P)$ denote the set of triples $(\alpha,\beta,h)$, where
$\alpha$ is principal $(\Gh,b_0)$-bundle over
$(I^n,\partial I^n)$,
$\beta$ is principal $(\Gg,a_0)$-bundle over $(I^n\times I,\partial I^n\times I)$
and $h$ is an isomorphism $h:\alpha\otimes P\to\beta|_{L^{n+1}}$.
As in the case of $S_n(P)$, we have homotopies of such triples. We denote the
set of homotopy classes of triples in $Y_n(P)$ by $\Upsilon_n(P)$.
Concatenation of triples in $Y_n(P)$ induces a group structure on
$\Upsilon_n(P)$.

Let us first check that there is an isomorphism 
$\varphi:\Upsilon_n(P)\to\pi_n(\Gh,b_0)$ given by
$\varphi(\alpha,\beta,h)=\alpha$.
Indeed, for the inverse we take
$\varphi^{-1}(\alpha)=(\alpha,(\alpha\otimes P)\times I,\textnormal{id})$,
where $(\alpha\otimes P)\times I$ denotes the pullback of
$\alpha\otimes P$ along the projection
$(I^{n}\times I,\partial I^{n}\times I) \to (I^{n},\partial I^{n})$.
Both maps are well
defined (on the homotopy classes of triples) and the
composition $\varphi\circ\varphi^{-1}$ is clearly the identity.
We have to check the surjectivity of
$\varphi^{-1}\circ\varphi$ is also the identity.
To see this, we need to show that the triples
$(\alpha,\beta,h)$ and $(\alpha,\alpha\otimes P\times I,\textnormal{id})$
are homotopic in $Y_n(P)$.
Indeed, first we have the isomorphism $(id_\alpha,h\times \textnormal{id})$
from $(\alpha,\alpha\otimes P\times I,\textnormal{id})$ to
$(\alpha,\beta|_{L^{n+1}}\times I,h)$,
and from here we have the homotopy to
$(\alpha,\beta,h)$
of the form $(\alpha\times I,B,h\times I)$,
where $B$ is the pullback of $\beta$ along the map
$I^n\times I\to I^n$,
$((t_1,\ldots,t_{n-1},t_n),t)\mapsto(t_1,\ldots,t_{n-1},(1-t)t_n+t)$,
and $h\times I$ denotes the isomorphism induced by $h$ on the corresponding pullback.

Now we have to check that the sequence 
$$
\ldots\to\Sigma_n(P)\to\Upsilon_n(P)\to\pi_n(\Gg,a_0)
\to\Sigma_{n-1}(P)\to\Upsilon_{n-1}(P)\to\ldots
$$
is exact. The map $\Sigma_n(P)\to\Upsilon_n(P)$ is induced by the inclusion
$S_n(P)\to Y_n(P)$ (which restricts the implicit sections). 
The map $\Upsilon_n(P)\to\pi_n(\Gg,a_0)$ maps (the homotopy class of)
$(\alpha,\beta,h)$ to $\beta|_{I^{n}\times \{0\}}$.
The map $\pi_n(\Gg,a_0)\to\Sigma_{n-1}(P)$ maps $\beta$ to
$\Delta(\beta)=(\bar{b_0},\beta,\iota)$, where $\iota$ is the uniquely determined isomorphism of
bundles $\bar{b_0}$ (a bundle with global section) and $\beta|_{L^{n}}$ that
maps the global section of $\bar{b_0}$ to the global section of
$\beta|_{L^{n}}$.

(i) Exactness at $\Upsilon_n(P)$: The composition
$\Sigma_n(P)\to\Upsilon_n(P)\to\pi_n(\Gg,a_0)$ is zero, since the bundle
$\beta|_{I^{n}\times\{0\}}$ is trivial $(\Gg,a_0)$-bundle. 
On the other hand, if the image of the triple
$(\alpha,\beta,h)$, which equals $\beta|_{I^{n}\times\{0\}}$,
is homotopic to the trivial
$(\Gg,a_0)$-bundle, than  we just concatenate this homotopy with
$\beta$ to obtain a triple in $\Sigma_n(P)$ which maps to $(\alpha,\beta,h)$.

(ii)
Exactness at $\pi_n(\Gg,a_0)$: The composition
$\Upsilon_n(P)\to\pi_n(\Gg,a_0)\to\Sigma_{n-1}(P)$ is zero because
$(\alpha,\beta,h)\in\Upsilon_n(P)$ can be viewed, after deformation of the base space,
as a homotopy from $\Delta(\beta|_{I^{n}\times \{0\}})$ to trivial triple in $S_{n-1}(P)$.
Furthermore, if $\beta$ represents an element in $\pi_n(\Gg,a_0)$
such that $\Delta(\beta)$ is homotopic to the trivial triple in $S_{n-1}(P)$,
then this homotopy
can be viewed, after deformation of the base space, as an en element of $Y_{n}(P)$
which maps to the homotopy class of $\beta$ in $\pi_n(\Gg,a_0)$.
  
(iii) Exactness at $\Sigma_{n-1}(P)$: To see that the composition
$\pi_n(\Gg,a_0)\to\Sigma_{n-1}(P)\to\Upsilon_{n-1}(P)$ is trivial,
observe that the triple $\Delta(\beta)$ is homotopic to the trivial triple
in $Y_{n-1}(P)$ precisely because the implicit section is restricted.
On the other hand, if $(\alpha,\beta,h)$ is a triple in $S_{n-1}(P)$
which is homotopic to the trivial triple in $Y_{n-1}(P)$, then
homotopy, after deformation of the base space, represents
an element $\pi_n(G,a_0)$ which maps to the homotopy class
of $(\alpha,\beta,h)$.
\end{proof}

\begin{Definition} \rm
Let $\Gh$ and $\Gg$ be  topological groupoids and let
$P$ be  a principal $\Gg$-bundle over $\Gh$. The bundle
$P$ is a Serre fibration if for every triple $(\alpha,\beta,h)$,  where
$\alpha$ is a principal $\Gh$-bundle over $I^n$,
$\beta$ is a principal $\Gg$-bundle
over $I^{n+1}$ and
$h$ an isomorphism from $\alpha\otimes P$ to $\beta|_{I^n\times\left\{0\right\}}$,
there is a triple $(A,B,H)$ with $A$
a principal $\Gh$-bundle  over $I^{n+1}$, $B$ a principal $\Gg$-bundle  over
$I^{n+1}$ and $H$ an isomorphism from $A\otimes P$ to $B$ of principal
$\Gg$-bundles over $I^{n+1}$, such that the triples $(\alpha,\beta,h)$ and
$(A|_{I^n\times\left\{0\right\}},B,H|_{I^n\times\left\{0\right\}})$ are
isomorphic.
\end{Definition}

\begin{Remark} \rm
The notion of the isomorphism between triples $(\alpha,\beta,h)$,
as above is obvious, similar to the one used in the
definition of homotopy between the triples in $S_{n}(P)$ - the only difference
is that here the topological groupoids and principal bundles are not
assumed to be pointed.
Clearly, the notion of a Serre fibration is a well defined property
of a Morita map between topological groupoid, although its definition
is essentially intrinsic in the Morita bicategory of topological groupoids.
We see that the notion of Serre fibration in $\GPD$ is a generalization
of the notion of Serre fibration in the category of topological spaces, as the
above definition can be presented in the diagram
$$
\xymatrix{
I^n\ar[r]^-\alpha\ar[d]& \Gh\ar[d]^-{P}\\
I^{n+1}\ar[r]_-\beta\ar[ru]^-{A} & \Gg
}
$$
in the Morita bicategory of topological groupoids.
Note that we obtain an equivalent definition of a Serre fibration if
we replace $I^n=I^{n}\times\{0\}$ 
with $J^{n+1}$, or by $K^{n+1}=(I^{n}\times\{1\})\cup
(\partial I^{n}\times I)\subset I^{n+1}$, etc.
\end{Remark}

\begin{Proposition}\label{SerreProperties}
Let $P$ be a principal $\Gg$-bundle over $\Gh$ and
$Q$ a principal $\Gh$-bundle over $\Gk$.
\begin{enumerate}
\item [(i)] If $P$ is a Morita equivalence, then it is a Serre fibration.
\item [(ii)] If $P$ and $Q$ are both Serre fibrations, then $Q\otimes P$
             is also a Serre fibration.
\end{enumerate}
\end{Proposition}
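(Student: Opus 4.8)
The plan is to read the definition of a Serre fibration as a relative lifting problem in the Morita bicategory: the data $(\alpha,\beta,h)$ encode the commuting square whose left edge is the inclusion $I^n=I^n\times\{0\}\hookrightarrow I^{n+1}$ and whose right edge is $P$, and a ``lift'' is a diagonal $A\colon I^{n+1}\to\Gh$ together with $(B,H)$ and an isomorphism of triples $(\alpha,\beta,h)\cong(A|_{I^n\times\{0\}},B,H|_{I^n\times\{0\}})$. In both parts the whole content is the construction of the lift $A$; once $A$ is found, $B$ and $H$ are essentially forced and only a coherence square needs checking. Since the Serre fibration definition here is unpointed, there are no sections to track, and associativity and unit coherence of $\otimes$ in $\GPD$ do all the bookkeeping. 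I will use freely the harmless identity $(\gamma\otimes Q')|_S=(\gamma|_S)\otimes Q'$ for a fixed bundle $Q'$, which holds because restriction is pullback along $S\hookrightarrow I^{n+1}$ and commutes with tensoring by a fixed bundle.

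For (i), since $P$ is a Morita equivalence it is invertible in $\GPD$; let $P^{-1}$ be a principal $\Gh$-bundle over $\Gg$ inverse to $P$, so that $P^{-1}\otimes P\cong 1_\Gg$ and $P\otimes P^{-1}\cong 1_\Gh$. Given a triple $(\alpha,\beta,h)$ I would set $A=\beta\otimes P^{-1}$, a principal $\Gh$-bundle over $I^{n+1}$, take $B=\beta$, and let $H\colon A\otimes P=\beta\otimes(P^{-1}\otimes P)\to\beta$ be the canonical isomorphism from $P^{-1}\otimes P\cong 1_\Gg$ and associativity. Restricting gives $A|_{I^n\times\{0\}}=\beta|_{I^n\times\{0\}}\otimes P^{-1}$, and feeding in $h$ together with $P\otimes P^{-1}\cong 1_\Gh$ yields $\beta|_{I^n\times\{0\}}\otimes P^{-1}\cong(\alpha\otimes P)\otimes P^{-1}\cong\alpha$. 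This isomorphism $q\colon\alpha\to A|_{I^n\times\{0\}}$, paired with $\mathrm{id}_\beta$, supplies the isomorphism of triples; the required compatibility square commutes because $q$ and $H$ are assembled from the same unit and associativity isomorphisms and from $h$ itself.

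For (ii), with $P\colon\Gh\to\Gg$ and $Q\colon\Gk\to\Gg$... more precisely $Q\colon\Gk\to\Gh$, I would lift in two stages. Given a triple $(\alpha,\beta,h)$ for $Q\otimes P$, rewrite $\alpha\otimes(Q\otimes P)=(\alpha\otimes Q)\otimes P$ and put $\gamma=\alpha\otimes Q$, a principal $\Gh$-bundle over $I^n$; then $(\gamma,\beta,h)$ is a triple for $P$. Applying the Serre fibration property of $P$ produces $A_1$ over $I^{n+1}$ with $A_1|_{I^n\times\{0\}}\cong\gamma=\alpha\otimes Q$ and an isomorphism $A_1\otimes P\cong\beta$. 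The resulting isomorphism $\alpha\otimes Q\cong A_1|_{I^n\times\{0\}}$ turns $(\alpha,A_1,\cdot)$ into a triple for $Q$, and the Serre fibration property of $Q$ then produces $A$ over $I^{n+1}$ with $A|_{I^n\times\{0\}}\cong\alpha$ and $A\otimes Q\cong A_1$. This $A$ is the desired lift for $Q\otimes P$: taking $B=\beta$ and composing $A\otimes(Q\otimes P)=(A\otimes Q)\otimes P\cong A_1\otimes P\cong\beta$ defines $H$, while $A|_{I^n\times\{0\}}\cong\alpha$ gives the $\Gk$-component of the isomorphism of triples, with $\mathrm{id}_\beta$ as the $\Gg$-component.

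I expect the only real obstacle to be the diagram chase verifying the final compatibility square, namely that $H|_{I^n\times\{0\}}\circ(q\otimes\mathrm{id})$ recovers $h$. In part (i) this is a single square; in part (ii) it amounts to composing, after restriction to the bottom face, the two compatibility squares coming from the $P$-lift and the $Q$-lift, with the associativity isomorphism inserted between them. This is purely formal once the lifts are in place, so I anticipate no genuine difficulty beyond careful tracking of the coherence isomorphisms.
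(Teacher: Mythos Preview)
Your proposal is correct and follows exactly the approach the paper intends: for (i) you use the concrete inverse $P^{-1}$ and the natural isomorphisms $P\otimes P^{-1}\cong\Gh$, $P^{-1}\otimes P\cong\Gg$ precisely as the paper's hint indicates, and for (ii) your two-stage lift is the straightforward verification the paper alludes to without detail. The coherence checks you flag at the end are indeed routine, as you anticipate.
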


\begin{proof}
It is straightforward to check both assertions.
To check (i), one uses the fact that for if $P$ is a Morita
equivalence, then the inverse of $P$ in the Morita category
can be represented by the principal $\Gh$-bundle $P^{-1}$ over $\Gg$,
which equals $P$ as the topological spaces, but has actions transposed.
In this way, there are in fact natural isomorphisms
$P\otimes P^{-1}\cong\Gh$ and $P^{-1}\otimes P\cong \Gg$, which
are to be used in the argument.
\end{proof}

\begin{Proposition}\label{prop:Serrefunctor}
Let $\phi:\Gh\to\Gg$ be a functor between topological groupoids
such that $\phi:\Gh_0\to\Gg_0$ is a Serre fibration and
$(\phi,s):\Gh_1\to\Gg_1\times_{\Gg_0}\Gh_0$
is a surjective Serre fibration. Then the associated
principal $\Gg$-bundle $\langle\phi\rangle$ over $\Gh$
is a Serre fibration.
\end{Proposition}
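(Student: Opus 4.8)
The plan is to unwind the definition of Serre fibration for $P=\langle\phi\rangle$ and reduce it to two homotopy lifting problems for maps of spaces, one for $\phi\colon\Gh_0\to\Gg_0$ and one for $(\phi,s)\colon\Gh_1\to\Gg_1\times_{\Gg_0}\Gh_0$. Concretely, I am given a principal $\Gh$-bundle $\alpha$ over $I^n$, a principal $\Gg$-bundle $\beta$ over $I^{n+1}$, and an isomorphism $h\colon\alpha\otimes\langle\phi\rangle\to\beta|_{I^n\times\{0\}}$, and I must produce $A$, $B$ and an isomorphism $H\colon A\otimes\langle\phi\rangle\to B$ with $(A|_{I^n\times\{0\}},B,H|_{I^n\times\{0\}})$ isomorphic to $(\alpha,\beta,h)$. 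The key simplification is that $I^n$ and $I^{n+1}$ are compact, hence paracompact, so the covers by domains of local sections are numerable and $\alpha,\beta$ are automatically numerable; being bundles over a contractible space, by the classification of numerable principal bundles recalled above they are isomorphic to trivial bundles. Replacing the triple by an isomorphic one, I may therefore assume $\alpha=\langle a\rangle$ and $\beta=\langle f\rangle$ for continuous maps $a\colon I^n\to\Gh_0$ and $f\colon I^{n+1}\to\Gg_0$, viewed as functors into the unit subgroupoids. Using the natural isomorphism $\langle a\rangle\otimes\langle\phi\rangle\cong\langle\phi\circ a\rangle$, the datum $h$ becomes an isomorphism $\langle\phi\circ a\rangle\cong\langle f|_{I^n\times\{0\}}\rangle$ of trivial $\Gg$-bundles over $I^n$, which is the same thing as a map $w\colon I^n\to\Gg_1$ with $s\circ w=\phi\circ a$ and $t\circ w=f|_{I^n\times\{0\}}$.

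Next I would remove the discrepancy between $\phi\circ a$ and $f|_{I^n\times\{0\}}$ using the second hypothesis. The pair $(w,a)$ is a map $I^n\to\Gg_1\times_{\Gg_0}\Gh_0$, and since $(\phi,s)$ is a surjective Serre fibration and $I^n$ is contractible, I can first choose a lift of a single point, using surjectivity, and then spread it over $I^n$ by the homotopy lifting property along a contraction of $I^n$, obtaining $\tilde w\colon I^n\to\Gh_1$ with $\phi\circ\tilde w=w$ and $s\circ\tilde w=a$. Setting $a'=t\circ\tilde w$ gives $\phi\circ a'=f|_{I^n\times\{0\}}$, while $\tilde w$ realizes an isomorphism $\langle a\rangle\cong\langle a'\rangle$ whose image under $\langle\phi\rangle$ is exactly $w$, hence compatible with $h$. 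After this isomorphism I may assume that $\phi\circ a=f|_{I^n\times\{0\}}$ holds on the nose and that $h$ is the canonical isomorphism.

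Finally I would extend $a$ over the cylinder using the first hypothesis. Viewing $f\colon I^n\times I\to\Gg_0$ as a homotopy and $a$ as a lift of its restriction to $I^n\times\{0\}$, the homotopy lifting property of the Serre fibration $\phi\colon\Gh_0\to\Gg_0$ produces $\tilde a\colon I^{n+1}\to\Gh_0$ with $\phi\circ\tilde a=f$ and $\tilde a|_{I^n\times\{0\}}=a$. I then set $A=\langle\tilde a\rangle$ and $B=\beta=\langle f\rangle$, and let $H$ be the canonical isomorphism $A\otimes\langle\phi\rangle=\langle\phi\circ\tilde a\rangle=\langle f\rangle=B$. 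By construction $A|_{I^n\times\{0\}}=\langle a\rangle=\alpha$, $B=\beta$, and $H|_{I^n\times\{0\}}=h$, so the triple $(A|_{I^n\times\{0\}},B,H|_{I^n\times\{0\}})$ is isomorphic to $(\alpha,\beta,h)$, which is exactly what the definition of a Serre fibration demands.

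The main obstacle is the bookkeeping in the middle step: translating the abstract isomorphism $h$ of associated bundles into the concrete arrow $w$, and then matching up the anchor maps after lifting, all while keeping the sections that are implicit in the marked structure consistent. The surjectivity of $(\phi,s)$ enters in exactly one place, to obtain the initial point-lift of $(w,a)$, which explains why it is needed in addition to the plain homotopy lifting property; by contrast $\phi\colon\Gh_0\to\Gg_0$ is only required to be a Serre fibration, since there the lift is extended from one already present over $I^n\times\{0\}$. The reduction to trivial bundles over the cubes, legitimate because these bundles are isomorphic and not merely homotopic to trivial ones, is what turns both parts of the problem into genuine homotopy lifting problems for maps of topological spaces.
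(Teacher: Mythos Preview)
Your argument is correct, but it follows a genuinely different route from the paper's proof. The paper does \emph{not} trivialize $\alpha$ and $\beta$ globally; instead it dissects $I^n$ and $I^{n+1}$ into grids of small closed cubes, represents $\alpha$ and $\beta$ by $\Gh$- and $\Gg$-cocycles $\{f_\mu,h_{\mu\nu}\}$ and $\{F_{\mu'},g_{\mu'\nu'}\}$ on these covers, and then lifts the $\Gg$-cocycle to an $\Gh$-cocycle piece by piece along $\phi$. Surjectivity of $(\phi,s)$ is used to lift the functions encoding $h$ over each small cube, and the Serre fibration hypotheses are used repeatedly to propagate lifts from one cube to the next. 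By contrast, you invoke the classification of numerable principal groupoid bundles (stated in Section~2) to replace $\alpha$ and $\beta$ by bundles $\langle a\rangle$ and $\langle f\rangle$ associated to single maps, after which the whole problem collapses to two ordinary homotopy lifting problems --- one along $(\phi,s)$ to absorb the isomorphism $w$, and one along $\phi_0$ to extend $a$ over the cylinder. This is exactly the strategy the paper later uses to prove the analogous Proposition~4.5 for Serre groupoids, where global sections are supplied by Proposition~4.3 rather than by the classifying-space theory. Your approach is cleaner and more conceptual; the paper's cocycle argument is more self-contained, as it does not rely on the cited classification theorem. A minor remark: the definition of Serre fibration involves no marked or pointed structure, so the comment about ``sections implicit in the marked structure'' can be dropped.
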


Before we give the proof,
let us first recall the notion of a $\Gg$-cocycle of a
principal $\Gg$-bundle $P$ over a space $B$. The bundle $P$ has sections
$\{\sigma_i\}_{i\in\Lambda}$ over an open covering
$\{U_i\}_{i\in\Lambda}$ of $B$. Denote 
$g_{ij}(b)=\vartheta(\sigma_i(b),\sigma_j(b))$ for
$b\in U_i\cap U_j$, where $\vartheta$ is the translation function of
the bundle $P$. Write $f_i=\epsilon\circ\sigma_i:U_i\to\Gg_0$.
Note that
$s(g_{ij}(b))=f_j(b)$
$t(g_{ij}(b))=f_i(b)$ for any $b\in U_i\cap U_j$.
Furthermore,
$g_{ii}(b)=1_{f_i(b)}$ for $b\in U_i$ and
$g_{ij}(b)g_{jk}(b)=g_{ik}(b)$ for
$b\in U_i\cap U_j \cap U_k$.
We say that a family of functions $\{f_{i},g_{ij}\}$
satisfying the above conditions is a $\Gg$-cocycle on $B$.
Any $\Gg$-cocycle on $B$ on the other hand
determines a principal $\Gg$-bundle over $B$ \cite{HilSkan}. 

\begin{proof}[Proof of Proposition~\ref{prop:Serrefunctor}]
The maps from Proposition~\ref{prop:Serrefunctor} fit into the diagram:
$$
\xymatrix{
\Gh_1\ar[r]^-{(\phi,s)}\ar[dr]_-\phi&\Gg_1\times_{\Gg_0}\Gh_0\ar[d]_-{pr_1}\ar[r]^-{pr_2}
   &\Gh_0\ar[d]^-\phi \\
   & \Gg_1\ar[r]^-s & \Gg_0  
}
$$
It follows that $\phi:\Gh_1\to\Gg_1$ is a surjective Serre fibration, because
the maps $\phi:\Gh_0\to\Gg_0$ and
$(\phi,s):\Gh_1\to\Gg_1\times_{\Gg_0}\Gh_0$
are both surjective Serre fibrations.

We have to check the Serre fibration property for the principal bundle
$\langle\phi\rangle$.
Let $(\alpha,\beta,h)$ be a triple,
where $\alpha$ is a principal $\Gh$-bundle over $I^n$,
$\beta$ is a principal $\Gg$-bundle over $I^{n+1}$ and
$h$ is an isomorphism from $\alpha\otimes\langle\phi\rangle$ to $\beta|_{I^n\times\{0\}}$. 
We represent both bundles $\alpha$ and $\beta$ with cocycles,
by dissecting the cube $I^n$
into a family of small cubes. More precisely, we choose a large natural number
$N$ adn a small positive number $\epsilon$,
take $C_{i}=(\frac{i-1}{N}-\epsilon,\frac{i}{N}+\epsilon)\cap I$ for $i=1,\ldots N$
and
$$ C_{\mu}=C_{\mu_1}\times C_{\mu_2}\times\cdots\times C_{\mu_n}\subset I^n$$
for any multi-index
$\mu=(\mu_1,\mu_2,\ldots,\mu_n)$,
$\mu_1,\mu_2,\ldots,\mu_n=1,\ldots N$.
Then $\{C_\mu\}$ is a finite open cover of $I^n$.
If we replace the intervals $C_{i}$ in this construction with slightly
smaller closed intervals $D_{i}=[\frac{i-1}{N}-\frac{\epsilon}{2},
\frac{i}{N}+\frac{\epsilon}{2}]\cap I$,
we obtain a finite closed cover $\{D_\mu\}$ of $I^n$.
We do this analogously in the dimension $n+1$, obtaining
a finite open cover $C'_{\mu'}$ and a finite closed cover $D'_{\mu'}$ of $I^{n+1}$.
For $N$ large enough, we can represent
$\alpha$ and $\beta$ with cocycles
$\{f_\mu,h_{\mu\nu}\}$ and $\{F_{\mu'},g_{\mu'\nu'}\}$
on open covers $\{C_\mu\}$ and $\{C'_{\mu'}\}$ respectively.
We can restrict these cocycles to the closed covers
$\{D_\mu\}$ respectively $\{D'_{\mu'}\}$, obtaining so called
``closed'' cocycles which equally well represent the principal bundles
(we will use the same notation for this restrictions).
The principal bundle  $\alpha\otimes\langle\phi\rangle$ is then
given by the cocycle $\{\phi\circ f_\mu,\phi\circ h_{\mu\nu}\}$.
The isomorphism $h$ from $\alpha\otimes\langle\phi\rangle$ to
$\beta|_{I^n\times\{0\}}$ is, in terms of the cocycles,
given by a family of functions $r_\mu:C_\mu\to\Gg_1$.

Now we will lift the closed $\Gg$-cocycle $\{F_{\mu'},g_{\mu'\nu'}\}$
to a $\Gh$-cocycle along $\phi$.
First observe that we can lift the functions $r_\mu:D_\mu\to\Gg_1$
along $\phi$ to functions
$\widetilde{r}_\mu:D_\mu\to\Gh_1$
because $(\phi,s):\Gh_1\to\Gg_1\times_{\Gg_0}\Gh_0$ is a surjective Serre
fibration. Since $\phi:\Gh_0\to\Gg_0$ is a Serre fibration,
we can lift $F_{(1,1,\ldots,1)}$ along $\phi$ to
$\widetilde{F}_{(1,1,\ldots,1)}:D'_{(1,1,\ldots,1)}\to\Gh_0$ such
that $t\circ \widetilde{r}_{(1,1,\ldots,1)}=
\widetilde{F}_{(1,1,\ldots,1)}|_{D_{(1,1,\ldots,1)}}$. 
From the elements of the cocycle already lifted we calculate the
initial lift of the element $g_{(1,1,\ldots,1)(1,2,\ldots,1)}$, and because
$(\phi,s):\Gh_1\to\Gg_1\times_{\Gg_0}\Gh_0$ is a Serre fibration we can lift the
entire function $g_{(1,1,\ldots,1)(1,2,\ldots,1)}$ to a function
$\widetilde{g}_{(1,1,\ldots,1)(1,2,\ldots,1)}:
D'_{(1,1,\ldots,1)}\cap D'_{(1,2,\ldots,1)}\to\Gh_1$.
Now the elements of the cocycle already
lifted determine the initial lift of the functions
$F_{(1,2,\ldots,1)}$, which can be lifted because
$\phi:\Gh_0\to\Gg_0$ is a Serre fibration. Proceeding in this way,
we lift the entire $\Gg$-cocycle $\{F_{\mu'},g_{\mu'\nu'}\}$,
and obtain a closed $\Gh$-cocycle. Restricting this cocycle to
the interiors of their domains, we obtain an open $\Gh$-cocycle
representing the desired principal $\Gh$-bundle over $I^{n+1}$.
\end{proof}

Let
$(P,p_0)$ be a principal $(\Gh,b_0)$-bundle over $(\Gg,a_0)$.
We see that the $\Gh$-action on $P$ restricts to
$\epsilon^{-1}(a_0)$. Therefore, we
get the translation groupoid
$$\Gh\ltimes\epsilon^{-1}(a_0)
=(\Gh\times_{\Gh_0}\epsilon^{-1}(a_0)\rightrightarrows\epsilon^{-1}(a_0)),$$
which we call the fiber of $P$ over $a_{0}$.
The groupoid $\Gh\ltimes\epsilon^{-1}(a_0)$ is also
pointed with  $p_0\in\epsilon^{-1}(a_0)$.

\begin{Lemma}\label{Lemma:H_eps}
Let $(X,\sigma)$ be principal $(\Gh,b_0)$-bundle over $(I^n,\partial I^n)$
such that the principal $(\Gg,a_0)$-bundle 
$(X\otimes P,\sigma\otimes p_0)$ over $(I^n,\partial I^n)$
is trivial.
Then there exists a right
$\Gh\ltimes\epsilon^{-1}(a_0)$-action on $X$
such that $(X,\sigma)$ is a principal
$(\Gh\ltimes\epsilon^{-1}(a_0),p_0)$-bundle over $(I^n,\partial I^n)$.
\end{Lemma}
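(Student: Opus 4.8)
The plan is to use the triviality of $X\otimes P$ to produce an anchor map $\alpha\colon X\to\epsilon^{-1}(a_0)$ and then to reinterpret the right $\Gh$-action already carried by $X$ as a right $\Gh\ltimes\epsilon^{-1}(a_0)$-action along $\alpha$. Write $\pi_X\colon X\to I^n$ and $\epsilon_X\colon X\to\Gh_0$ for the two structure maps of $X$, and $\pi_P\colon P\to\Gh_0$, $\epsilon\colon P\to\Gg_0$ for those of $P$, so that $X\otimes P=(X\times_{\Gh_0}P)/\Gh$ under the identification $[x,p]=[xh,h^{-1}p]$. Since $(X\otimes P,\sigma\otimes p_0)$ is trivial, there is a global section $\tau\colon I^n\to X\otimes P$ with $\epsilon\circ\tau\equiv a_0$ that restricts to $\sigma\otimes p_0$ over $\partial I^n$. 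Because $X$ is a principal $\Gh$-bundle over $I^n$, the $\Gh$-action is free and transitive on the fibres of $\pi_X$, so every class in the fibre $(X\otimes P)_b$ has a unique representative with a prescribed first coordinate lying over $b$; I would therefore define $\alpha(x)$ to be the unique $p\in P$ with $[x,p]=\tau(\pi_X(x))$.

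First I would record the basic properties of $\alpha$. Well-definedness is immediate from freeness of the $\Gh$-action on $X$, and $\alpha(x)\in\epsilon^{-1}(a_0)$ because the left $\Gh$-action on $P$ preserves $\epsilon$; the fibre condition gives $\pi_P(\alpha(x))=\epsilon_X(x)$. A short computation with the identification $[x,p]=[xh,h^{-1}p]$ yields the key equivariance
$$\alpha(xh)=h^{-1}\alpha(x),$$
which intertwines the right $\Gh$-action on $X$ with the left $\Gh$-action on $\epsilon^{-1}(a_0)$. Continuity of $\alpha$ I would check locally: over a neighbourhood $U$ admitting a section $s$ of $\pi_X$, the quotient $X\times_{\Gh_0}P\to X\otimes P$ restricts to a homeomorphism from the points $(s(b),p)$ (with $\pi_P(p)=\epsilon_X(s(b))$) onto $(X\otimes P)|_U$, so $\tau$ determines a continuous $b\mapsto p(b)$ with $\tau(b)=[s(b),p(b)]$; writing a general $x=s(\pi_X(x))\,h(x)$ with $h(x)$ continuous (by principality of $X$) gives $\alpha(x)=h(x)^{-1}p(\pi_X(x))$. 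Finally, since $\tau|_{\partial I^n}=\sigma\otimes p_0$ and representatives are unique, $\alpha\circ\sigma\equiv p_0$.

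With $\alpha$ in hand I would define the right $\Gh\ltimes\epsilon^{-1}(a_0)$-action by $x\cdot(h,q):=xh$ for those arrows $(h,q)$ with $hq=\alpha(x)$; the equivariance relation shows this is well defined, that the anchor of $x\cdot(h,q)$ is $q$, and that the action axioms reduce to those of the right $\Gh$-action on $X$. It then remains to verify the principal-bundle axioms for $X$ over $I^n$ with respect to this action: local sections of $\pi_X$ are inherited from the $\Gh$-bundle structure, the projection $\pi_X$ is preserved, and the decisive condition (v) follows by reorganising the homeomorphism $X\times_{\Gh_0}\Gh_1\cong X\times_{I^n}X$ through $\alpha$, since the assignment $(x,(h,q))\mapsto(x,xh)$ corresponds to $(x,h)\mapsto(x,xh)$ and is bijective because $q=h^{-1}\alpha(x)$ is forced. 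The section condition is exactly $\alpha\circ\sigma\equiv p_0$, while the local extensions of $\sigma$ as a map of pairs are already present in the $(\Gh,b_0)$-structure, so $(X,\sigma)$ becomes a principal $(\Gh\ltimes\epsilon^{-1}(a_0),p_0)$-bundle over $(I^n,\partial I^n)$. I expect the only genuine obstacle to be the continuity of $\alpha$ together with axiom (v); both are resolved by the same device, namely pushing the pointwise ``unique representative'' description through local sections of $\pi_X$ and the principal-bundle homeomorphism for $X$, with everything else being routine verification.
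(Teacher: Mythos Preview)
Your proposal is correct and follows essentially the same approach as the paper: use a global section of the trivial bundle $X\otimes P$ to define the anchor $\alpha:X\to\epsilon^{-1}(a_0)$ via the unique representative $\widetilde{\sigma}(\pi(x))=x\otimes\alpha(x)$, then set $x\cdot(h,p)=xh$ when $\alpha(x)=hp$ and verify principality from that of the underlying $\Gh$-action. You in fact supply more detail than the paper (continuity of $\alpha$ via local sections, the equivariance $\alpha(xh)=h^{-1}\alpha(x)$, and the explicit reduction of axiom~(v)), but the argument is the same.
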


\begin{proof}
Since the $(\Gg,a_0)$-bundle $(X\otimes P,\sigma\otimes p_0)$ is a trivial,
it has a global section $\widetilde{\sigma}$ that extends $\sigma\otimes p_0$
maps to $a_0$ via $X\otimes P\to \Gg_{0}$.
For any $x\in X$ we have
$\widetilde{\sigma}(\pi(x))=x\otimes \alpha(x)$ for an uniquely determined
$\alpha(x)\in\epsilon^{-1}(a_0)$, this gives us a map
$\alpha:X\to\epsilon^{-1}(a_0)$. Note that
$\alpha(\sigma(b))=p_0$ for any $b\in\partial I^n$. 

The right action of $\Gh\ltimes\epsilon^{-1}(a_0)$ on $X$ along 
$\alpha$ is given by $x(h,p)=xh$ for any $x\in X$ and
$(h,p)\in \Gh\times_{\Gh_0}\epsilon^{-1}(a_0)$ with
$\alpha(x)=hp$ (note that, by applying $\pi$, that this equation
implies $\epsilon(x)=t(h)$).
One can check that
$(X,\sigma)$ is a principal $(\Gh\ltimes\epsilon^{-1}(a_0),p_0)$-bundle.
Indeed, if $x(h,p)=x(h',p')$, then $xh=xh'$ and hence $h=h'$,
while $\alpha(x)=hp=h'p'$ implies $p=p'$. This means that the
$\Gh\ltimes\epsilon^{-1}(a_0)$-action on $X$ is free.
To see that the action is transitive along the fibers of $\pi:X\to I^n$,
let $x,x'\in X$ with $\pi(x)=\pi(x')$. We can choose $h\in\Gh_{1}$ such that
$xh=x'$, and derive $x'=x(h,h^{-1}\alpha(x))$.
\end{proof}

\begin{Theorem}\label{Theorem:LES}
Let $(P,p_0)$ be a principal $(\Gh,b_0)$-bundle over $(\Gg,a_0)$ such that
$P$ is a Serre fibration.
Then there exist a natural long exact sequence
$$
\ldots\to\pi_n(\Gh\ltimes\epsilon^{-1}(a_0))\stackrel{\pi_n(\textnormal{pr}_1)}{\to}\pi_n(\Gh)
\stackrel{\pi_{n}(P)}{\to}\pi_n(\Gg)\to\pi_{n-1}(\Gh\ltimes\epsilon^{-1}(a_0))\to\ldots 
$$ 
\end{Theorem}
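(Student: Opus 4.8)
The plan is to deduce this sequence from the general long exact sequence of Theorem~\ref{Theorem:LES_general}, which, applied to $P$, reads
$$
\ldots\to \Sigma_n(P)\to\pi_n(\Gh,b_0)\stackrel{\pi_n(P)}{\to}\pi_n(\Gg,a_0)\to\Sigma_{n-1}(P)\to\ldots
$$
Comparing this with the claimed sequence, it suffices to produce, for every $n$, a natural isomorphism $\Phi_n\colon\pi_n(\Gh\ltimes\epsilon^{-1}(a_0))\to\Sigma_n(P)$ and to verify that, under these isomorphisms, the connecting maps $\Sigma_n(P)\to\pi_n(\Gh,b_0)$ turn into $\pi_n(\textnormal{pr}_1)$. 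The Serre fibration hypothesis is used only to show that $\Phi_n$ is an isomorphism: the groups $\Sigma_n(P)$ are the homotopy groups of the \emph{homotopy} fiber of $P$, and the content of the theorem is that for a fibration this homotopy fiber is equivalent to the strict fiber $\Gh\ltimes\epsilon^{-1}(a_0)$.

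First I would construct $\Phi_n$. Given a principal $(\Gh\ltimes\epsilon^{-1}(a_0),p_0)$-bundle $\gamma$ over $(I^n,\partial I^n)$, push it forward along the projection functor $\textnormal{pr}_1\colon\Gh\ltimes\epsilon^{-1}(a_0)\to\Gh$ to obtain an $(\Gh,b_0)$-bundle $\alpha$ over $(I^n,\partial I^n)$. Because the composite Morita map $\Gh\ltimes\epsilon^{-1}(a_0)\to\Gh\stackrel{P}{\to}\Gg$ factors through the point $a_0$ (the inclusion $\epsilon^{-1}(a_0)\hookrightarrow P$ furnishes a canonical global section, valued in $a_0$, of the associated $\Gg$-bundle $\langle\textnormal{pr}_1\rangle\otimes P$), the bundle $\alpha\otimes P$ is canonically trivial; call the resulting trivialization $h$. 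Taking $\beta$ to be the trivial $(\Gg,a_0)$-bundle over $(I^{n+1},J^{n+1})$, the triple $(\alpha,\beta,h)$ lies in $S_n(P)$, and I set $\Phi_n(\gamma)=[(\alpha,\beta,h)]$. One checks that $\Phi_n$ is a well-defined homomorphism, and that by construction the composite $\Sigma_n(P)\to\pi_n(\Gh,b_0)$ (which sends $[(\alpha,\beta,h)]$ to $[\alpha]$, as in the proof of Theorem~\ref{Theorem:LES_general}) precomposed with $\Phi_n$ is exactly $\pi_n(\textnormal{pr}_1)$.

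The crux is bijectivity of $\Phi_n$, and here the Serre property and Lemma~\ref{Lemma:H_eps} do the work. To construct an inverse, I start from a triple $(\alpha,\beta,h)\in S_n(P)$ and apply the definition of a Serre fibration to lift $\beta$ through $P$: there is an $\Gh$-bundle $A$ over $I^{n+1}$ with an isomorphism $A\otimes P\cong\beta$ restricting to $h$ on $L^{n+1}$, so that $A|_{L^{n+1}}\cong\alpha$. Over the face complex $J^{n+1}$ the bundle $\beta$, hence $A|_{J^{n+1}}\otimes P$, is trivial, so Lemma~\ref{Lemma:H_eps} equips $A|_{J^{n+1}}$ with the structure of a principal $\Gh\ltimes\epsilon^{-1}(a_0)$-bundle. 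Via a homeomorphism of pairs $(J^{n+1},\partial J^{n+1})\cong(I^n,\partial I^n)$ this yields a class in $\pi_n(\Gh\ltimes\epsilon^{-1}(a_0))$, and I would check that it is independent of the chosen lift (two lifts differ by a homotopy, again liftable by the Serre property one dimension higher) and that this assignment is inverse to $\Phi_n$.

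The step I expect to be the main obstacle is precisely this bijectivity argument. It requires propagating the relative and base-point data through the Serre lift — ensuring the section over $\partial I^n$ valued at $p_0$ survives, that the trivializations on the various faces of $J^{n+1}$ glue compatibly, and that \emph{homotopies} (not merely bundles) lift — so that the construction descends to homotopy classes and is genuinely inverse to $\Phi_n$; naturality in $P$ must be tracked throughout. Once $\Phi_n$ is established as a natural isomorphism compatible with the connecting maps, substituting $\pi_n(\Gh\ltimes\epsilon^{-1}(a_0))$ for $\Sigma_n(P)$ in the sequence of Theorem~\ref{Theorem:LES_general} produces the asserted long exact sequence, with $\Sigma_n(P)\to\pi_n(\Gh)$ becoming $\pi_n(\textnormal{pr}_1)$ and the maps $\pi_n(P)$ together with the remaining connecting homomorphisms unchanged.
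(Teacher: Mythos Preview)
Your proposal is correct and follows essentially the same strategy as the paper: reduce to Theorem~\ref{Theorem:LES_general} and exhibit a natural isomorphism between $\pi_n(\Gh\ltimes\epsilon^{-1}(a_0),p_0)$ and $\Sigma_n(P)$, with your $\Phi_n$ being precisely the paper's $\psi^{-1}$ (pushing a fiber bundle along $\textnormal{pr}_1$ and pairing it with the trivial $\beta$ and the canonical trivialization $\iota$). The only cosmetic difference is in the description of the inverse: the paper first extends $\alpha$ trivially over the side faces to $K^{n+1}$, lifts via the Serre property, and then reads off $A$ on the single bottom face $I^n\times\{0\}$, whereas you lift from $L^{n+1}$ directly and read off $A$ on all of $J^{n+1}\cong I^n$; since the extension over $\partial I^n\times I$ is trivial these yield the same homotopy class, and either variant works.
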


\begin{proof}
By Theorem \ref{Theorem:LES_general}, it is sufficient to
prove that the groups $\Sigma_n(P)$ are isomorphic to the
groups $\pi_n(\Gh\ltimes\epsilon^{-1}(a_0),p_0)$. 

First, we define a map $\psi:\Sigma_n(P)\to \pi_n(\Gh\ltimes\epsilon^{-1}(a_0),p_0)$,
as follows: for a triple $(\alpha,\beta,h)\in S_{n}(P)$,
we extend $\alpha$ to $K^{n+1}$ with a trivial
$(\Gh,b_0)$-bundle over
$(\partial I^n\times I,\partial I^n\times I)$
and extend the isomorphism $h$ to $K^{n+1}$
so that it becomes an isomorphism of bundles over
$(K^{n+1},\partial I^n\times I)$. Then the Serre fibration
property of $P$ gives us a triple $(A,B,H)$, and we use
Lemma~\ref{Lemma:H_eps} on $A|_{I^n\times\left\{0\right\}}$ to
get a principal
$(\Gh\ltimes\epsilon^{-1}(a_0),p_0)$-bundle
$\psi(\alpha,\beta,h)$
over $(I^n,\partial I^n)$.
Using again the Serre fibration property of $P$, one can
see that this map is well defined on $\Sigma_n(P)$, i.e. depends only
on the homotopy class of the triple.

To describe the inverse map
$\psi^{-1}$, let $(Q,\rho)$ be a principal
$(\Gh\ltimes\epsilon^{-1}(a_0),p_0)$-bundle over $(I^n,\partial I^n)$.
Denote by $\textnormal{pr}:\Gh\ltimes\epsilon^{-1}(a_0)\to\Gh$ the natural projection,
which equals $\pi$ on objects, and write $\bar{a_0}=I^{n+1}\times t^{-1}(a_{0})$ for the trivial
$(\Gg,a_0)$-bundle over $I^{n+1}$. 
We set $\psi^{-1}(Q,\rho)=(Q\otimes\langle\textnormal{pr} \rangle,\bar{a}_0,\iota)$,
where $\iota:Q\otimes\langle \textnormal{pr}\rangle\otimes P\to\bar{a_0}|_{L^{n+1}}$
is an isomorphism given by
$$ \iota(q\otimes(p,h)\otimes p')= (\pi(q),\vartheta(p,hp')). $$
This gives us a map $\psi^{-1}:\pi_n(\Gh\ltimes\epsilon^{-1}(a_0),p_0)\to \Sigma_n(P)$.

To check that $\psi\circ\psi^{-1}$ is the identity, observe that in the construction of 
the homotopy class $\psi(\psi^{-1}(Q,\rho))$ one may take $A$ to be the pullback
of $Q\otimes\langle\textnormal{pr} \rangle$ along the projection $I^{n}\times I\to I^{n}$,
and it is then sufficient to note that there is an isomorphism
of $(\Gh\ltimes\epsilon^{-1}(a_0),p_0)$-bundles between
$Q\otimes\langle\textnormal{pr} \rangle$ (viewed as a principal
$\Gh\ltimes\epsilon^{-1}(a_0),p_0)$-bundle by Lemma~\ref{Lemma:H_eps})
and $Q$, which maps $q\otimes(p,h)$ to $q(h,h^{-1}p)$.

Finally, we have to check that $\psi^{-1}\circ\psi$ is the identity as well.
Let $(\alpha,\beta,h)\in S_{n}(P)$ and construct $\psi(\alpha,\beta,h)$
as above. It is sufficient to observe that the principal $(\Gh,b_0)$-bundles
$\psi(\alpha,\beta,h)\otimes \langle\textnormal{pr} \rangle$
and $A|_{I^n\times\left\{0\right\}}$ are naturally isomorphic
(the isomorphisms maps $a\otimes (p,h)$ to $ah$).
\end{proof}

\begin{Example}\rm
Let $\phi:(\Gh,b_0)\to (\Gg,a_0)$ be a functor between pointed topological groupoids
such that the associated principal $\Gg$-bundle $\langle\phi\rangle$
over $\Gh$ is a Serre fibration. Then there exists a natural long exact sequence
as in Theorem \ref{Theorem:LES} in which the fiber
$\Gh\ltimes \epsilon^{-1}(a_0)$ of $\langle\phi\rangle$ equals
the translation groupoid $\Gh\ltimes(\Gh_0\times_{\Gg_0}s^{-1}(a_0))$.
\end{Example}

\section{Serre groupoids}\label{sec:serre}

\textsl{
In this section we introduce a special class of topological groupoids 
called Serre groupoids. We show that the calculation of homotopy
groups of Serre groupoids is particularly simple.
Examples will show that there are many topological groupoids that are 
Morita equivalent to Serre groupoids. }
\vspace{0.25cm}

\begin{Definition}  \rm
A Serre groupoid is a topological groupoid $\Gg$ 
for which the source map $s:\Gg_{1}\to\Gg_{0}$ is a Serre fibration.
\end{Definition}

\begin{Remark} \rm
If $\Gg$ is a Serre groupoid, then the target map $t:\Gg_1\to \Gg_0$ is also a Serre
fibration.
\end{Remark}

\begin{Proposition}\label{Proposition:globsec}
Let $\Gg$ be a Serre groupoid and $P$ a 
principal $\Gg$-bundle over $\Gh$.
Then the map $\pi:P\to \Gh_{0}$ is a Serre fibration.
\end{Proposition}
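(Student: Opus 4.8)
The plan is to show that $\pi$ is a Serre fibration by first establishing that it is locally, over an open cover of $\Gh_0$, identified with the pullback of the target map $t\colon\Gg_1\to\Gg_0$, and then invoking the fact that being a Serre fibration is a local property of the base. First I would use property~(i) of a principal bundle to fix, around each point of $\Gh_0$, a local section $\sigma\colon V\to P$ of $\pi$ over an open set $V\subset\Gh_0$. Setting $f=\epsilon\circ\sigma\colon V\to\Gg_0$, I consider the pullback $V\times_{\Gg_0}\Gg_1=\{(v,g)\mid f(v)=t(g)\}$ of $t$ along $f$, together with its first projection $\textnormal{pr}_1$ onto $V$.

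Next I would identify $\pi^{-1}(V)$ with this pullback over $V$. The map $\Theta\colon V\times_{\Gg_0}\Gg_1\to\pi^{-1}(V)$, $\Theta(v,g)=\sigma(v)g$, is well defined since $\epsilon(\sigma(v))=f(v)=t(g)$, and it lands in $\pi^{-1}(V)$ because $\pi(\sigma(v)g)=\pi(\sigma(v))=v$ by property~(ii). Its inverse is $p\mapsto(\pi(p),\vartheta(\sigma(\pi(p)),p))$, which makes sense because $\sigma(\pi(p))$ and $p$ lie in the same $\pi$-fiber, and which is continuous because the translation map $\vartheta$ is continuous (it is extracted from the homeomorphism $\mu$ of property~(v)). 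Thus $\Theta$ is a homeomorphism intertwining $\pi|_{\pi^{-1}(V)}$ with $\textnormal{pr}_1\colon V\times_{\Gg_0}\Gg_1\to V$.

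Now, since $\Gg$ is a Serre groupoid, $s$ is a Serre fibration, and hence so is $t$: indeed $t=s\circ\textnormal{inv}$, where $\textnormal{inv}\colon\Gg_1\to\Gg_1$, $g\mapsto g^{-1}$, is a homeomorphism. As a pullback of the Serre fibration $t$, the projection $\textnormal{pr}_1$ is again a Serre fibration, so each restriction $\pi|_{\pi^{-1}(V)}\colon\pi^{-1}(V)\to V$ is a Serre fibration. Letting $V$ range over an open cover $\{V_i\}$ of $\Gh_0$ on which $\pi$ admits sections, I conclude that $\pi$ restricts to a Serre fibration over each member of an open cover of $\Gh_0$.

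Finally I would invoke the local-to-global principle for Serre fibrations: a continuous map whose restriction over each set of an open cover of its codomain is a Serre fibration is itself a Serre fibration. This is the step I expect to be the principal obstacle, since it is the only nonformal ingredient; it is established by the standard subdivision argument, using the Lebesgue number lemma to cut a lifting problem posed on $I^{n}\times I$ into small subcubes each of which maps into a single $V_i$, and then lifting over these subcubes inductively, the relative homotopy lifting property of each $\pi|_{\pi^{-1}(V_i)}$ over the cube-with-faces pairs supplying each inductive step. Assembling the three steps yields that $\pi\colon P\to\Gh_0$ is a Serre fibration.
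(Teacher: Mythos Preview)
Your proof is correct and follows essentially the same approach as the paper: identify $\pi^{-1}(V)$ with the pullback of $t\colon\Gg_1\to\Gg_0$ via a local section, use that $t$ is a Serre fibration (since $s$ is), and then apply the local-to-global principle for Serre fibrations. You simply supply more detail on the explicit homeomorphism and on the local-to-global step than the paper does.
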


\begin{proof}
The bundle $\pi:P\to \Gh_0$ has local sections over an open covering
$\{U_\lambda\}_{\lambda\in\Lambda}$. For any $\lambda$ we have the
pullback diagram
$$
\xymatrix{
P|_{U_\lambda}\ar[r]\ar[d]_-{\pi|_{U_{\lambda}}}&\Gg_1\ar[d]^-t\\
U_\lambda\ar[r]& \Gg_0.
}
$$
The map $\pi|_{U_\lambda}$ is a Serre fibration, because it is
a pullback of the Serre fibration $t$.
Thus $\pi:P\to \Gh_0$ is a Serre fibration locally over an open
covering of $\Gh_0$, which yields that it is itself a Serre fibration.
\end{proof}

\begin{Proposition}\label{prop:Serrefunctorbundle}
Let $\phi:\Gh\to \Gg$ be a continuous functor
between Serre groupoids which is a Serre fibration on objects.
Then the principal bundle $\left\langle \phi \right\rangle$
associated to $\phi$ is a Serre fibration.
\end{Proposition}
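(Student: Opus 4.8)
The plan is to deduce the statement from Proposition~\ref{prop:Serrefunctor}, so I would verify its two hypotheses for $\phi$: that $\phi\colon\Gh_0\to\Gg_0$ is a Serre fibration, and that $(\phi,s)\colon\Gh_1\to\Gg_1\times_{\Gg_0}\Gh_0$ is a surjective Serre fibration. The first is exactly the assumption that $\phi$ is a Serre fibration on objects, so essentially all of the content lies in the second.

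For the second, I would first observe that $(\phi,s)$ is a map over $\Gh_0$: writing $pr_2\colon\Gg_1\times_{\Gg_0}\Gh_0\to\Gh_0$ for the projection, one has $pr_2\circ(\phi,s)=s\colon\Gh_1\to\Gh_0$. Here $pr_2$ is the pullback of the source map of $\Gg$ along $\phi$,
$$
\xymatrix{
\Gg_1\times_{\Gg_0}\Gh_0\ar[r]^-{pr_1}\ar[d]_-{pr_2}&\Gg_1\ar[d]^-{s}\\
\Gh_0\ar[r]_-{\phi}&\Gg_0,
}
$$
and since $\Gg$ is a Serre groupoid its source map $s$ is a Serre fibration, so $pr_2$ is a Serre fibration as well (a pullback of a Serre fibration is a Serre fibration, as used in the proof of Proposition~\ref{Proposition:globsec}). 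On the other hand $s\colon\Gh_1\to\Gh_0$ is a Serre fibration because $\Gh$ is a Serre groupoid. Thus $(\phi,s)$ is a map between two Serre fibrations over the common base $\Gh_0$, and I would reduce its fibration property to a relative homotopy lifting problem over cubes.

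To carry out such a lift, given a cube in $\Gh_1$ together with a homotopy of its image in $\Gg_1\times_{\Gg_0}\Gh_0$, I would first lift the $\Gh_0$-component (the source datum) using that $s\colon\Gh_1\to\Gh_0$ is a Serre fibration; this reduces the problem to correcting the $\Gg_1$-component while keeping the source fixed. The induced path of targets lives in $\Gg_0$ and can be lifted to $\Gh_0$ using that $\phi\colon\Gh_0\to\Gg_0$ is a Serre fibration. What then remains is to produce genuine arrows of $\Gh$ realising simultaneously the prescribed source and the prescribed $\phi$-image; because the cubes are contractible and the construction is seeded by the data already present on a face (exactly as in the cell-by-cell cocycle lifting in the proof of Proposition~\ref{prop:Serrefunctor}), I would propagate the lift across each cell.

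The main obstacle is precisely this last simultaneous matching: producing an arrow of $\Gh$ whose source is prescribed and whose image under $\phi$ is a prescribed arrow of $\Gg$. The naive strategy of lifting the source by the Serre fibration $s\colon\Gh_1\to\Gh_0$ and then correcting the $\phi$-image by left translation is circular, since the correction is again a lifting problem of the very same type. Overcoming it requires using the Serre groupoid structures of $\Gh$ \emph{and} $\Gg$ together, not merely that $\phi$ is a fibration on objects, together with the flexibility -- intrinsic to the bicategorical definition of a Serre fibration -- of replacing the bundles in a triple by isomorphic ones, i.e.\ of altering the relevant cocycles by coboundaries before lifting. I would also keep in mind that $(\phi,s)$ need not be literally surjective, so the surjectivity invoked in Proposition~\ref{prop:Serrefunctor} should be supplied locally by the seed coming from the bottom face and the contractibility of the cells, rather than globally.
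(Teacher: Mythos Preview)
Your plan to reduce to Proposition~\ref{prop:Serrefunctor} does not close, and you have essentially diagnosed why: under the hypotheses of the present proposition the map $(\phi,s)\colon\Gh_1\to\Gg_1\times_{\Gg_0}\Gh_0$ is in general neither surjective nor a Serre fibration. A clean counterexample is the unit functor $\Gg_0\to\Gg$ for any Serre groupoid $\Gg$ with non-identity arrows: here $\phi$ is the identity on objects, both groupoids are Serre, and $(\phi,s)$ is the section $x\mapsto(1_x,x)$ of $pr_2$, which is typically not a fibration and is certainly not surjective. Your proposed workaround---going inside the cocycle argument of Proposition~\ref{prop:Serrefunctor} and supplying surjectivity ``locally from the seed''---is not carried out, and the step you label as the main obstacle (producing an arrow of $\Gh$ with prescribed source and prescribed $\phi$-image) is exactly the content of $(\phi,s)$ being a fibration, so the circularity you flag is real.

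The paper's proof takes a quite different and much shorter route that avoids Proposition~\ref{prop:Serrefunctor} entirely. The key observation is Proposition~\ref{Proposition:globsec}: since $\Gh$ and $\Gg$ are Serre groupoids, the projections of the principal bundles $\alpha$ over $I^n$ and $\beta$ over $I^{n+1}$ are Serre fibrations, hence have global sections (the base spaces being cubes). Thus $\alpha$ and $\beta$ are pullbacks of the unit bundles along maps $\alpha'\colon I^n\to\Gh_0$ and $\beta'\colon I^{n+1}\to\Gg_0$. The isomorphism $h$ becomes a map $w\colon I^n\to\Gg_1$ realising a natural isomorphism between $\phi_0\circ\alpha'$ and $\beta'|_{I^n\times\{0\}}$. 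Now use that $t\colon\Gg_1\to\Gg_0$ is a Serre fibration (the Serre property of $\Gg$) to extend $w$ to $W\colon I^{n+1}\to\Gg_1$ with $t\circ W=\beta'$; replacing $\beta'$ by $\beta''=s\circ W$ gives another global section of the same bundle $\beta$ with $\phi_0\circ\alpha'=\beta''|_{I^n\times\{0\}}$ on the nose. Finally, lift $\beta''$ through the Serre fibration $\phi_0\colon\Gh_0\to\Gg_0$ to a map $A\colon I^{n+1}\to\Gh_0$ extending $\alpha'$. So the two Serre hypotheses are used, but not to make $(\phi,s)$ a fibration: the Serre property of $\Gh$ is used only through Proposition~\ref{Proposition:globsec} to trivialize $\alpha$, and the Serre property of $\Gg$ is used to straighten the trivialization of $\beta$ so that a direct lift along $\phi_0$ applies.
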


\begin{proof}
Let $\alpha$ be a principal $\Gh$-bundle over $I^n$,
$\beta$ a principal $\Gg$-bundle over $I^n\times I$ and
$h:\alpha\otimes \left\langle \phi \right\rangle\to \beta|_{I^n\times\{0\}}$
an isomorphisms. Both bundles $\alpha$ and $\beta$ have
global sections, since their projections are Serre fibrations
by Proposition~\ref{Proposition:globsec}. This means that
we can view $\alpha$ as the pullback along a map
$\alpha':I^n\to P$ and $\beta$ as a pullback
along a map $\beta':I^n\times I\to \Gg_0$.
Furthermore, there exists a natural isomorphisms of functors
$\phi_{0}\circ\alpha':I^{n}\to \Gg$ and
$\beta'|_{I^{n}\times \{0\}}\to\Gg$, given by a map
$w:I^{n}\to\Gg_1$.
Since the target map $\Gg_1 \to\Gg_0$ is a Serre fibration,
we can extend the map $w$ to $W:I^{n+1}\to\Gg_1$ such that
$t\circ W=\beta'$. Now $\beta''=s\circ W$ also represents
the bundle $\beta$, and
$\phi_{0}\circ\alpha'=\beta''|_{I^{n}\times \{0\}}$.
Finally, since $\phi_0$ is a Serre fibration,
we can extend $\alpha'$ to a map $A:I^{n+1}\to\Gh_0$ such that
$\phi_0 \circ A=\beta''$.
\end{proof}

\begin{Example}\rm
Let $\phi:H\to G$ be a continuous homomorphism between topological groups.
The topological groupoids $(H\rightrightarrows \ast)$ and
$(G\rightrightarrows \ast)$ representing $H$ and $G$ are clearly Serre groupoids,
and $\phi$ is a functor between these two groupoids which is a Serre fibration
(and in fact the identity) on objects.  By Proposition
\ref{prop:Serrefunctorbundle} it follows that the associated
principal bundle $\langle\phi\rangle$, the total space of which equals $G$,
is a Serre fibration. Theorem \ref{Theorem:LES} then gives us a long exact sequence
$$
\ldots\to
\pi_n(H\ltimes G)\stackrel{\pi_n(\textnormal{pr}_1)}{\to}
\pi_n(H\rightrightarrows\ast)\stackrel{\pi_{n}(\phi)}{\to}
\pi_n(G\rightrightarrows\ast)\to
\pi_{n-1}(H\ltimes G)\to\ldots 
$$ 
\end{Example}

\begin{Proposition}
Let $\Gh$ and $\Gg$ be Serre groupoids and  $P$ a principal $\Gg$-bundle over
$\Gh$ such that the map $\epsilon:P\to \Gg_0$ is a Serre fibration. Then $P$
is a Serre fibration.
\end{Proposition}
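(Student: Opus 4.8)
The plan is to follow the strategy of Proposition~\ref{prop:Serrefunctorbundle}, replacing the object map of a functor by the two structure maps $\pi:P\to\Gh_0$ and $\epsilon:P\to\Gg_0$ of the bundle, and invoking the hypothesis on $\epsilon$ exactly where that proof used that the object map is a Serre fibration. So let $(\alpha,\beta,h)$ be a triple as in the definition, with $\alpha$ a principal $\Gh$-bundle over $I^n$, $\beta$ a principal $\Gg$-bundle over $I^{n+1}$, and $h:\alpha\otimes P\to\beta|_{I^n\times\{0\}}$ an isomorphism. Since $\Gh$ and $\Gg$ are Serre groupoids, Proposition~\ref{Proposition:globsec} shows that the projections of $\alpha$, of $\beta$, and of the principal $\Gg$-bundle $\alpha\otimes P$ are all Serre fibrations; as their bases are the contractible cubes, each of these three bundles admits a global section. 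These sections let me represent $\alpha$ by a map $a:I^n\to\Gh_0$ with $\alpha\cong a^*\Gh_1$, and $\beta$ by a map $b:I^{n+1}\to\Gg_0$ with $\beta\cong b^*\Gg_1$; and, using the identity $a^*\Gh_1\otimes P\cong a^*P$, the section of $\alpha\otimes P\cong a^*P$ is precisely a lift $\tilde a:I^n\to P$ of $a$ through $\pi$, so that $\alpha\otimes P$ is represented by $\epsilon\circ\tilde a:I^n\to\Gg_0$.

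Next I would encode the isomorphism $h$ as section data. Between the two principal $\Gg$-bundles over $I^n$ represented by $\epsilon\circ\tilde a$ and by $b|_{I^n\times\{0\}}$, the isomorphism $h$ is given by a map $w:I^n\to\Gg_1$ with $s\circ w=\epsilon\circ\tilde a$ and $t\circ w=b|_{I^n\times\{0\}}$. Because $\Gg$ is a Serre groupoid, its target map $t:\Gg_1\to\Gg_0$ is a Serre fibration, so I can extend $w$ to a map $W:I^{n+1}\to\Gg_1$ with $t\circ W=b$. Then $b'':=s\circ W:I^{n+1}\to\Gg_0$ also represents $\beta$ (the natural isomorphism being $W$), and by construction $b''|_{I^n\times\{0\}}=s\circ w=\epsilon\circ\tilde a$.

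The key step, and the only place where the hypothesis on $\epsilon$ enters, is to extend $\tilde a:I^n\times\{0\}\to P$ to a map $\hat a:I^{n+1}\to P$ with $\epsilon\circ\hat a=b''$. Since $\epsilon:P\to\Gg_0$ is a Serre fibration and $I^{n+1}=I^n\times I$ with $\tilde a$ prescribed on $I^n\times\{0\}$, $b''$ prescribed on all of $I^{n+1}$, and $\epsilon\circ\tilde a=b''|_{I^n\times\{0\}}$, this is exactly the homotopy lifting property for the cube $I^n$, so $\hat a$ exists. I then put $A:=(\pi\circ\hat a)^*\Gh_1$ and $B:=\beta$.

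It remains to assemble the isomorphism and verify the restrictions. Using $(\pi\circ\hat a)^*\Gh_1\otimes P\cong(\pi\circ\hat a)^*P$, the section $\hat a$ represents $A\otimes P$ via $\epsilon\circ\hat a=b''$, which in turn represents $\beta$ through $W$; composing yields the required isomorphism $H:A\otimes P\to B$. Restricting to $I^n\times\{0\}$ gives $A|_{I^n\times\{0\}}\cong a^*\Gh_1\cong\alpha$, while $H|_{I^n\times\{0\}}$ is induced by $W|_{I^n\times\{0\}}=w$, which is exactly $h$; hence $(\alpha,\beta,h)$ and $(A|_{I^n\times\{0\}},B,H|_{I^n\times\{0\}})$ are isomorphic triples, as the definition demands. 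I expect the main difficulty to be purely organizational: keeping the chosen global sections, the comparison map $w$, and its extension $W$ (equivalently the replacement $b''$) mutually consistent, so that the restriction of $H$ reproduces $h$ up to isomorphism of triples. The one genuinely new analytic input, the lift $\hat a$ of $\tilde a$ along $\epsilon$, is immediate from the standing hypothesis.
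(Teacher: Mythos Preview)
Your argument is correct, but it follows a different route from the paper's proof. The paper does not verify the Serre fibration property directly; instead it passes through the double translation groupoid $\Gh\ltimes P\rtimes\Gg$. The first projection $\textnormal{pr}_{1}:\Gh\ltimes P\rtimes\Gg\to\Gh$ is a weak equivalence, hence $\langle\textnormal{pr}_{1}\rangle$ is a Serre fibration by Proposition~\ref{SerreProperties}(i), and one has $\langle\textnormal{pr}_{1}\rangle\otimes P\cong\langle\textnormal{pr}_{3}\rangle$. The third projection $\textnormal{pr}_{3}:\Gh\ltimes P\rtimes\Gg\to\Gg$ is a functor between Serre groupoids whose object map is $\epsilon$, so Proposition~\ref{prop:Serrefunctorbundle} applies and $\langle\textnormal{pr}_{3}\rangle$ is a Serre fibration; Proposition~\ref{SerreProperties} then yields that $P\cong\langle\textnormal{pr}_{1}\rangle^{-1}\otimes\langle\textnormal{pr}_{3}\rangle$ is a Serre fibration.

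Your approach instead reruns the \emph{proof} of Proposition~\ref{prop:Serrefunctorbundle} with $\pi$ and $\epsilon$ playing the roles of the object map: global sections of $\alpha$, $\beta$, $\alpha\otimes P$ (available by Proposition~\ref{Proposition:globsec}) reduce everything to maps into $\Gh_0$, $\Gg_0$, $P$; the target map of $\Gg$ lets you replace $b$ by $b''$ with $b''|_{I^n\times\{0\}}=\epsilon\circ\tilde a$; and the hypothesis on $\epsilon$ produces the lift $\hat a$. This is more hands-on and avoids introducing $\Gh\ltimes P\rtimes\Gg$ (and checking that it is a Serre groupoid), at the cost of repeating the mechanism already packaged in Proposition~\ref{prop:Serrefunctorbundle}. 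Both arguments use exactly the same three ingredients (both groupoids Serre, $\epsilon$ a Serre fibration) in essentially the same places; the paper just routes them through the existing statements rather than through their proofs.
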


\begin{proof}
Recall that the projection $\textnormal{pr}_{1}:\Gh\ltimes P\rtimes \Gg\to\Gh$
is a weak equivalence(\cite{PoissonGeo})
and that $\langle\textnormal{pr}_{1}\rangle\otimes P\cong \langle\textnormal{pr}_{3}\rangle$
Proposition \ref{prop:Serrefunctorbundle} implies that
$\langle\textnormal{pr}_{3}\rangle$ is a Serre fibration.
Now it follows from Proposition \ref{SerreProperties} that $P$
is a Serre fibration as well.
\end{proof}

\begin{Example}
Let $\Gg$ be a Serre groupoid, acting on a space $X$.
Then the associated translation groupoid is also a Serre groupoid.
\end{Example}

The next theorem gives a method for calculating the homotopy groups of a Serre
groupoid.

\begin{Theorem}\label{Thm:LES_serreGPD}
Let $(\Gg,a_0)$ be a pointed Serre groupoid.
Then there is a natural long exact sequence
$$
\ldots\to\pi_n(s^{-1}(a_0),1_{a_0})\stackrel{\pi_n(t)}{\to}\pi_n(\Gg_0,a_0)
\to\pi_n(\Gg,a_0)\to\pi_{n-1}(s^{-1}(a_0),1_{a_0})\to\ldots
$$
\end{Theorem}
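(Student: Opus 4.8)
The plan is to exhibit the desired sequence as the long exact sequence of Theorem~\ref{Theorem:LES} associated to a single, well-chosen Morita map, namely the one coming from the unit inclusion of $\Gg_0$ into $\Gg$. Let $\iota:(\Gg_0\rightrightarrows\Gg_0)\to(\Gg,a_0)$ be the functor that is the identity on objects and sends each unit arrow $1_x$ to $1_x\in\Gg_1$. Its associated principal $\Gg$-bundle is
$$\langle\iota\rangle=\Gg_0\times_{\Gg_0}\Gg_1=\{(y,g)\mid y=t(g)\}\cong\Gg_1,$$
the homeomorphism being $(y,g)\mapsto g$. Under this identification the bundle projection $\pi:\langle\iota\rangle\to\Gg_0$ becomes the target map $t$, the anchor $\epsilon:\langle\iota\rangle\to\Gg_0$ of the right $\Gg$-action becomes the source map $s$, and the distinguished point is $1_{a_0}$.

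First I would check that $\langle\iota\rangle$ is a Serre fibration, so that Theorem~\ref{Theorem:LES} applies. For this I would invoke Proposition~\ref{prop:Serrefunctorbundle}: the unit groupoid $(\Gg_0\rightrightarrows\Gg_0)$ is a Serre groupoid because its source map is the identity of $\Gg_0$, which is trivially a Serre fibration; $\Gg$ is a Serre groupoid by hypothesis; and $\iota$ is a continuous functor whose effect on objects is the identity, hence a Serre fibration on objects. Thus $\langle\iota\rangle$ is a Serre fibration, and $(\langle\iota\rangle,1_{a_0})$ is a principal $\Gg$-bundle over $(\Gg_0,a_0)$ of exactly the kind treated by the Example immediately preceding the theorem.

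Applying that Example (equivalently, Theorem~\ref{Theorem:LES}) to $\iota$ produces a natural long exact sequence
$$\ldots\to\pi_n\big(\Gg_0\ltimes\epsilon^{-1}(a_0)\big)\stackrel{\pi_n(\textnormal{pr}_1)}{\to}\pi_n(\Gg_0\rightrightarrows\Gg_0)\to\pi_n(\Gg,a_0)\to\pi_{n-1}\big(\Gg_0\ltimes\epsilon^{-1}(a_0)\big)\to\ldots,$$
so it remains only to identify the fiber term and the first map. Since $\epsilon=s$ under $\langle\iota\rangle\cong\Gg_1$, the fiber is $\Gg_0\ltimes s^{-1}(a_0)$, with distinguished point $1_{a_0}\in s^{-1}(a_0)$; equivalently this is the groupoid $\Gg_0\ltimes(\Gg_0\times_{\Gg_0}s^{-1}(a_0))$ of the Example, using $\Gg_0\times_{\Gg_0}s^{-1}(a_0)\cong s^{-1}(a_0)$. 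Because the acting groupoid $(\Gg_0\rightrightarrows\Gg_0)$ has only unit arrows, its left action on $\langle\iota\rangle$ is trivial, whence the translation groupoid $\Gg_0\ltimes s^{-1}(a_0)$ is nothing but the unit groupoid $(s^{-1}(a_0)\rightrightarrows s^{-1}(a_0))$. Consequently $\pi_n(\Gg_0\ltimes\epsilon^{-1}(a_0))=\pi_n(s^{-1}(a_0),1_{a_0})$ and $\pi_n(\Gg_0\rightrightarrows\Gg_0)=\pi_n(\Gg_0,a_0)$. Finally, on objects $\textnormal{pr}_1:\Gg_0\ltimes s^{-1}(a_0)\to\Gg_0$ is $p\mapsto\pi(p)$, which is exactly the restriction of $t$ to $s^{-1}(a_0)$; hence $\pi_n(\textnormal{pr}_1)=\pi_n(t)$. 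Substituting these identifications turns the displayed sequence into the asserted one.

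I do not expect a genuine obstacle here: the content is already packaged in Theorem~\ref{Theorem:LES} and Proposition~\ref{prop:Serrefunctorbundle}, and the remaining work is bookkeeping. The two points that require care are the verification that the unit inclusion meets the hypotheses of Proposition~\ref{prop:Serrefunctorbundle} (in particular that the unit groupoid on $\Gg_0$ is a Serre groupoid) and the identification of the fiber translation groupoid $\Gg_0\ltimes s^{-1}(a_0)$ with the unit groupoid on $s^{-1}(a_0)$, which is what pins down both the fiber term $\pi_n(s^{-1}(a_0),1_{a_0})$ and the connecting map $\pi_n(\textnormal{pr}_1)=\pi_n(t)$.
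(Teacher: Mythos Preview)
Your proposal is correct and follows essentially the same approach as the paper: apply Proposition~\ref{prop:Serrefunctorbundle} to the unit inclusion $\iota:\Gg_0\to\Gg$ (identity on objects, hence a Serre fibration on objects, between Serre groupoids) and then invoke Theorem~\ref{Theorem:LES}. The paper's proof is extremely terse and omits the identifications you spell out---that $\langle\iota\rangle\cong\Gg_1$ with $\epsilon=s$, that the fiber $\Gg_0\ltimes s^{-1}(a_0)$ is the unit groupoid on $s^{-1}(a_0)$, and that $\textnormal{pr}_1$ restricts to $t$ on objects---so your write-up is in fact a more complete version of the same argument.
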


\begin{proof}
We have the obvious functor from the space $\Gg_0$
to the groupoid $\Gg_1$. The principal $\Gg$-bundle over $\Gg_0$
associated to this functor is identity on objects, hence it is
a Serre fibration by Proposition \ref{prop:Serrefunctorbundle}.
We can therefore apply Theorem \ref{Theorem:LES}.
\end{proof}

We will now use our results  to calculate homotopy
groups of some topological groupoids.

\begin{Example} \rm
(1)
A unit groupoid $X\rightrightarrows X$ over topological space $X$ is a Serre
groupoid, with $s^{-1}(x_0)=\{1_{x_0}\}$. The long exact sequence
of Theorem \ref{Thm:LES_serreGPD} gives us the already mentioned isomorphisms
$\pi_n(X\rightrightarrows X)\cong\pi_n(X)$.

(2)
Let $\textnormal{Pair}(X)$ be a pair groupoid over pointed space $X$.
It is also a Serre groupoid, because the source map is simply a projection.
In particular we have $s^{-1}(x_0)=X$.

(3)
A topological group $G$ is a topological groupoid $(G\rightrightarrows \ast)$
with one object, and it is also a Serre groupoid. The long exact sequence
in this case gives us the known result that
$$
\pi_n(G\rightrightarrows\ast)\cong\pi_{n-1}(G)\cong\pi_{n}(BG),
$$
where $BG$ is a classifying space of group $G$.

(4)
Let $\Gg$ be a Serre groupoid and $(X,x_0)$ a pointed
right $\Gg$-space. Then the groupoid
$X\rtimes \Gg$ is a pointed Serre groupoid, and we get a long exact sequence
$$
\ldots\to\pi_n(s^{-1}(\epsilon(x_0)))\to\pi_n(X)\to\pi_n(X\rtimes \Gg)\to
\pi_{n-1}(s^{-1}(\epsilon(x_0)))\to\ldots
$$
In case $\Gg$ is also \'{e}tale and $X$ is connected,
this sequence reduces to the short exact sequence
$$
0\to\pi_1(X)\to\pi_1(X\rtimes \Gg)\to s^{-1}(\epsilon(x_0))\to 0
$$
and isomorphisms
$$
\pi_n(X)\to\pi_n(X\rtimes \Gg)
$$
for $n\geq 2$.

(5)
Let $X$ be a semilocally simply connected connected pointed topological space
and let $\Pi_1(X)$ be the fundamental groupoid over $X$.
One can check that $\Pi_1(X)$ is Serre groupoid. We have
$s^{-1}(x_0)=\widetilde{X}$, the universal covering space over $X$.
We get the long exact sequence
$$ 
\ldots\to\pi_n(\widetilde{X})\to\pi_n(X)\to\pi_n(\Pi_1(X))\to\pi_{n-1}(\widetilde{X})\to\ldots
$$
Because $\pi_1(\widetilde{X})=0$ and $\pi_n(\widetilde{X})\to\pi_n(X)$ are isomorphisms
for $n\geq 2$, we have 
$$
\pi_1(\Pi_1(X))=\pi_1(X)
$$
and the other homotopy groups are zero.
We can get the same result in a different
way as well, because $\Pi_1(X)$ is transitive groupoid, thus Morita equivalent to
the groupoid $(\pi_1(X)\rightrightarrows \ast)$ representing the discrete group
$\pi_1(X)$.

(6)
Let $\fol$ be the standard foliation of the open M\"{o}bius band.
The associated  holonomy
groupoid is weakly equivalent to the translation groupoid $\ZZ_2\ltimes (-1,1)$
\cite[p.137]{IntroFolandLie}, therefore 
$$
\pi_1(\Hol(\textnormal{M\"ob},\fol))\cong\ZZ_2
$$
and the other homotopy groups are zero.

(7)
Let $\fol$ be the Kronecker foliation of the torus $T^2$. The associated
holonomy groupoid is weakly equivalent to translation groupoid
$\ZZ\ltimes S^1$ \cite[p.137]{IntroFolandLie}, and we get a short exact sequence 
$$
0\to\ZZ\to\pi_1(\Hol(T^2,\fol)))\to\ZZ\to 0.
$$
The other homotopy groups are zero.

(8)
Let $(M,\fol)$ be a foliated manifold such that every leaf is compact with
finite holonomy. Then by \cite[p.141]{IntroFolandLie} the source map of
the associated holonomy groupoid
$\Hol(M,\fol)$ is a proper map and in fact a fiber bundle
(\cite[p.200]{TopDiffGeo}), thus a Serre fibration.
So for any $a_0\in L\subseteq M$, where $L$ is a leaf of the foliation $\fol$,
we get the long exact sequence
$$
\ldots\to\pi_n(\widetilde{L})\to\pi_n(M)\to\pi_n(\Hol(M,\fol))\to
\pi_{n-1}(\widetilde{L})\to\ldots,
$$
where $\widetilde{L}$ is the holonomy covering space of $L$.

(9)
Let $\Gg$ be the proper \'{e}tale groupoid associated to
an orbifold $Q$ of dimension $n$. Then from
\cite[p.44]{IntroFolandLie} and \cite[p.143]{IntroFolandLie} it follows that
$\Gg$ is Morita equivalent to the translation groupoid
$U(n)\ltimes UF(Q)\rightrightarrows UF(Q)$,
where $UF(Q)$ is the unitary frame bundle associated to the orbifold
$Q$ and $U(n)$ the unitary group.
This translation groupoid is a Serre groupoid,
so we get the associated long exact sequence of homotopy groups 
$$
\ldots\to\pi_n(U(n))\to\pi_n(UF(Q))\to\pi_n(\Gg)\to\pi_{n-1}(U(n))\to\ldots.
$$
\end{Example}

\section{Riemannian foliations}
  
\textsl{
In this section we apply the theory that we have developed so far to
the holonomy groupoid of transversely complete and, more generally,
Riemannian foliations.
For definition and properties of such foliations,
see e.g. \cite{IntroFolandLie,MM2007,Molino1988}. 
}
\vspace{0.25cm}
  
First let us recall the definition of a transverse principal bundle
\cite[p.98]{IntroFolandLie}. 
Let $G$ be a Lie group, $(M,\fol)$ a foliated manifold and
$\pi:E\to M$ a (smooth) principal $G$-bundle with a foliation
$\widetilde{\fol}$ on the total space $E$ such that
\begin{enumerate}
\item[(i)]  $\widetilde{\fol}$ is preserved by the action of $G$, and
\item[(ii)] the projection $\pi:E\to M$ maps each leaf $\widetilde{L}$
of $\widetilde{\fol}$ onto a leaf
$L=\pi(\widetilde{L})$ of $\fol$ such that
the restriction $\pi|_{\widetilde{L}}:\widetilde{L}\to L$ is
the holonomy covering of $L$.
\end{enumerate}
Then we say that $(E,\widetilde{\fol})$ is a transverse principal $G$-bundle
over $(M,\fol)$.

For such a transverse principal $G$-bundle $(E,\widetilde{\fol})$
there is well defined natural projection functor
$$ \Hol(E,\widetilde{\fol})\to\Hol(M,\fol) .$$
If $\gamma$ is a path inside a leaf $L$ of $\fol$,
then, given some initial lift of the starting point, there is a
canonical lift $\widetilde{\gamma}$ of $\gamma$ along $\pi$
which lies inside a leaf of $\widetilde{\fol}$.
This lifting property is well defined on the holonomy classes of paths. 
Indeed,
suppose that $\gamma:(S^1,1)\to (M,m_0)$ is a loop in $L$ through with
trivial holonomy, choose $n_0\in\pi^{-1}(m_0)$ and write $\widetilde{L}$
for the leaf of $\widetilde{\fol}$ with $n_0\in\widetilde{L}$.
The canonical lift $\widetilde{\gamma}$ of $\gamma$ to $\widetilde{L}$ is
then again a loop.
We have to see that $\widetilde{\gamma}$ has trivial holonomy.

Since $\gamma$ has trivial holonomy, there exists a small 
transversal section $T$ of $(M,\fol)$ and a map
$$
\Gamma:S^1\times T\to (M,\fol)
$$
such that $\Gamma|_{S^1\times\{m_0\}}=\gamma$,
$\Gamma(S^1\times \{m\})$ lies in a leaf of $\fol$ for any $m\in T$
and
$\Gamma(\{z\}\times T)$ is a transversal section of $(M,\fol)$
for any $z\in S^1$. We may choose $T$ so small that there exists
a submanifold $\widetilde{T}$ of $E$ such that $e_0\in\widetilde{T}$,
$\pi:\widetilde{T}\to T$ is a diffeomorphism and the tangent space of $E$
at any point $e\in\widetilde{T}$ is a direct sum of the tangent space
of $\widetilde{T}$ at $e$, the tangent space of the fiber of $\pi$
through $e$ and the tangent space of the leaf of $\widetilde{\fol}$
through $e$. By applying the unique lifting property to
the loops $\Gamma(S^1\times\{m\})$, we obtain a lift $\widetilde{\Gamma}$
of $\Gamma$ along $\pi$. Now define a map
$\Psi:S^1\times T\times G\to E$ by
$$ \Psi(z,m,g)=\widetilde{\Gamma}(z,m)g.$$
This implies that $\widetilde{\gamma}=\Psi|_{S^1\times \{m_0\}\times \{1\}}$
has trivial holonomy, because
$\Psi(S^1\times \{m\}\times \{g\}$ lies in a leaf of $\widetilde{\fol}$
for any $m\in T$ and any $g\in G$, while
$\Psi(\{z\}\times T\times G$ is a transversal section of $(E,\widetilde{\fol})$.

\begin{Proposition}\label{Prop:Sf}
Let $\pi:(E,\widetilde{\fol})\to(M,\fol)$ be a transverse principal bundle
over a foliated manifold $(M,\fol)$. 
The projection functor $\Hol(E,\widetilde{\fol})\to\Hol(M,\fol)$
is a Serre fibration both on objects and on arrows, and
the principal bundle associated to this functor is also 
a Serre fibration.
\end{Proposition}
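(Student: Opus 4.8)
The plan is to establish the three assertions in turn, reducing the statement on arrows and the statement on the associated principal bundle to the statement on objects together with a single claim about the map $(\phi,s)$, so that Proposition~\ref{prop:Serrefunctor} applies. Write $\phi$ for the projection functor $\Hol(E,\widetilde{\fol})\to\Hol(M,\fol)$; on objects it is the bundle projection $\pi\colon E\to M$, while on arrows it sends the holonomy class $[\widetilde{\gamma}]$ of a leafwise path in $E$ to the class $[\pi\circ\widetilde{\gamma}]$. The object statement is immediate: $\pi\colon E\to M$ is a smooth principal $G$-bundle, hence a locally trivial fibre bundle and therefore a Serre fibration. This is precisely the first hypothesis of Proposition~\ref{prop:Serrefunctor}.

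The heart of the argument is the claim ($\ast$) that the map $(\phi,s)\colon\Hol(E,\widetilde{\fol})_1\to\Hol(M,\fol)_1\times_{M}E$, $[\widetilde{\gamma}]\mapsto([\pi\circ\widetilde{\gamma}],s(\widetilde{\gamma}))$, is a surjective Serre fibration. I would prove the stronger statement that $(\phi,s)$ is a homeomorphism, which is trivially a surjective Serre fibration. An element of the target is a pair $([\gamma],e)$ with $e\in\pi^{-1}(s([\gamma]))$; the unique leafwise lift of $\gamma$ starting at $e$, described in the paragraph preceding the proposition, is a leafwise path $\widetilde{\gamma}$ in $E$ with $(\phi,s)([\widetilde{\gamma}])=([\gamma],e)$, giving surjectivity. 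That same discussion shows that a loop of trivial holonomy lifts to a loop of trivial holonomy, so the holonomy class of the lift depends only on $([\gamma],e)$; this both makes the lift well defined on holonomy classes and yields injectivity of $(\phi,s)$. For continuity of the inverse I would use the local model provided by the transverse bundle structure: over a small transversal $T$ of $(M,\fol)$ one has the horizontal transversal $\widetilde{T}\subset E$ and the map $\Psi(z,m,g)=\widetilde{\Gamma}(z,m)g$ constructed preceding the proposition, which realizes the canonical lift continuously, indeed smoothly, in the initial point $e$ and the transverse parameter. Hence $(\phi,s)$ is a homeomorphism. As a sanity check, both spaces are manifolds of dimension $\dim M+\dim G+p$, where $p$ is the leaf dimension, consistent with $(\phi,s)$ being a diffeomorphism.

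Granting ($\ast$), the two remaining assertions follow formally, exactly as inside the proof of Proposition~\ref{prop:Serrefunctor}. For the arrow statement, factor $\phi$ on arrows as $pr_1\circ(\phi,s)$, where $pr_1\colon\Hol(M,\fol)_1\times_{M}E\to\Hol(M,\fol)_1$ is the projection; here $pr_1$ is the pullback of the Serre fibration $\pi\colon E\to M$ along $s\colon\Hol(M,\fol)_1\to M$, hence itself a Serre fibration, while $(\phi,s)$ is a Serre fibration by ($\ast$), so $\phi$ on arrows is a composite of Serre fibrations and thus a Serre fibration. Finally, ($\ast$) and the object statement are exactly the two hypotheses of Proposition~\ref{prop:Serrefunctor} for $\phi$, which gives that the associated principal $\Hol(M,\fol)$-bundle $\langle\phi\rangle$ over $\Hol(E,\widetilde{\fol})$ is a Serre fibration.

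The main obstacle is ($\ast$), and within it the continuity of the inverse, that is, the continuous (in fact smooth) dependence of the canonical leafwise lift on the initial point and on the projected path. Surjectivity and injectivity are immediate from the unique-lifting discussion already carried out before the proposition; the real work is to extract from the transverse principal bundle hypotheses a chart in which the lifting is manifestly continuous, and the construction of $\widetilde{T}$ and $\Psi$ preceding the proposition is tailor-made for this.
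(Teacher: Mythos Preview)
Your proof is correct and follows essentially the same route as the paper: both verify that $\phi$ is the Serre fibration $\pi$ on objects, that $(\phi,s)$ is a surjective Serre fibration via the unique lifting of holonomy classes discussed before the proposition, and then apply Proposition~\ref{prop:Serrefunctor} for the associated bundle and factor $\phi=\mathrm{pr}_1\circ(\phi,s)$ for the arrow statement. The only difference is that you make explicit the stronger fact that $(\phi,s)$ is a homeomorphism, whereas the paper simply records that the unique lifting property makes it a Serre fibration.
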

  
\begin{proof}
The projection functor
$\phi:\Hol(E,\widetilde{\fol})\to\Hol(M,\fol)$ equals the Serre fibration
$\pi:E\to M$ on objects. 
The last part of the statement
is a consequence of Proposition \ref{prop:Serrefunctor},
as the map
$(\phi,s):\Hol(E,\widetilde{\fol})_1 \to\Hol(M,\fol)_1 \times_{M}E$
is a Serre fibration because of the unique lifting property of
holonomy classes of paths discussed above.
Since the projection $\Hol(M,\fol)_1 \times_{M}E\to \Hol(M,\fol)_1 $
is also a Serre fibration, it follows that
$\phi:\Hol(E,\widetilde{\fol})_1 \to \Hol(M,\fol)_1 $ is a Serre fibration as well.
\end{proof}

\begin{Theorem} \label{theo:HolSerre}
The holonomy groupoid  $\Hol(M,\fol)$ of any
transversely complete foliation $(M,\fol)$
is a Serre groupoid.
\end{Theorem}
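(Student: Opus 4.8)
The plan is to unwind the definition: since $\Hol(M,\fol)_0 = M$, I must show that the source map $s:\Hol(M,\fol)_1\to M$ is a Serre fibration. The essential input is Molino's structure theory \cite{Molino1988}: transverse completeness means that $(M,\fol)$ is transversely parallelizable by a family $Y_1,\dots,Y_q$ of \emph{complete} foliate vector fields whose classes frame the normal bundle $TM/T\fol$. First I would record two consequences. Because each $Y_i$ is foliate, its flow $\phi^i_t$ carries leaves to leaves and is globally defined (completeness), hence induces an automorphism of $\Hol(M,\fol)$ commuting with $s$. And because any holonomy germ fixes its base point while preserving the transverse parallelism, and a local diffeomorphism fixing a point and preserving a framing must be the identity, every leaf has trivial holonomy; thus $\Hol(M,\fol)_1$ is the graph of the ``same leaf'' relation and $s^{-1}(x)$ is the leaf $L_x$ with its intrinsic topology.

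Next I would build the lifting. The local model of $\Hol$ over a foliation chart is a trivial projection, so the difficulty is entirely global. Using the complete flows $\phi^i_t$ together with leafwise displacement I would assemble a complete Ehresmann-type connection on $s:\Hol_1\to M$: given a path $c$ in $M$ issuing from $s(g_0)$, its velocity splits into a leafwise part, transported by sliding the arrow $g_0$ inside its leaf, and a transverse part, transported by the flows $\phi^i_t$; completeness guarantees that the resulting horizontal lift of $c$ starting at $g_0$ exists along the whole of $c$. Running this on each path $u\mapsto H(x,u)$ of a homotopy $H:I^n\times I\to M$, starting from a given lift $g:I^n\to\Hol_1$ of $H_0$, produces the required $\widetilde H$, with continuity in $x$ inherited from the continuity of the flows and the local product structure. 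This is precisely the homotopy lifting property, so $s$ is a Serre fibration and $\Hol(M,\fol)$ a Serre groupoid.

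The hard part will be making the global transverse transport precise and continuous. Locally everything is a soft projection; for a general foliation the source fibres (whole leaves) fail to assemble into a fibration exactly because transverse holonomy need not be defined for all time, and transverse completeness is the hypothesis that removes this failure. Concretely, the obstacle is to organise the non-commuting flows $\phi^i_t$ into a connection whose horizontal paths integrate globally and depend continuously on parameters, while respecting the finer leaf topology on the fibres. One clean way to discharge this, avoiding an explicit connection, is to pass to Molino's basic fibration $\rho:M\to W$ \cite{Molino1988}: over a trivialising chart $U=\rho^{-1}(V)\cong V\times F$ the set $U$ is saturated, $\fol|_U$ is the product of the trivial foliation on $V$ with the dense Lie foliation $\fol_F$ on the leaf closure $F$, and $s$ restricts to $\mathrm{id}_V\times s_F$. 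This reduces the claim to the source map of a complete Lie foliation, where the developing map is a fibration and transport is homogeneous; assembling the resulting local Serre fibrations over the numerable cover $\{U\}$ by the locality of the Serre-fibration property, as used in Proposition~\ref{Proposition:globsec}, then yields the theorem.
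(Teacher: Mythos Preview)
Your proposal is correct in outline and rests on the same two ingredients as the paper's proof: the existence of complete projectable vector fields $Y_1,\dots,Y_q$ framing the normal bundle, and the consequence that every leaf has trivial holonomy (so that $s^{-1}(x_0)\cong L$). The difference is in how these ingredients are deployed. The paper proves the stronger statement that $s:\Hol(M,\fol)_1\to M$ is a \emph{locally trivial} fibration: combining the flows of the $Y_i$ into a single map $T:L\times\RR^q\to M$, it restricts $T$ to a small foliation chart $W=T(U\times B)$ around $x_0$ and writes down an explicit diffeomorphism $W\times s^{-1}(x_0)\to s^{-1}(W)$ by sending $(y,[\gamma])$ to the holonomy class of the flow-translate $T_{t_y}\circ\gamma$ concatenated with a short path in the plaque through $y$. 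Your approach instead assembles an Ehresmann-type connection from the same flows and carries out homotopy lifting by parallel transport; this is sound, but the difficulty you flag---organising non-commuting flows into a transport that is globally defined, continuous in parameters, and compatible with the leaf topology on fibres---is exactly what the paper's argument sidesteps, since there the flows are used only to build a chart and the trivialisation is then a formula. Your alternative via Molino's basic fibration is a genuinely different reduction; it is elegant, but you must still verify that $s^{-1}(\rho^{-1}(V))$ agrees with the holonomy groupoid of the restricted foliation (trivial holonomy makes this immediate) and then dispatch the Lie-foliation case, so it is not shorter than the direct local-triviality argument.
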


\begin{proof}
In fact, we will prove that the source map $s:\Hol(M,\fol)_1\to M$ is locally
trivial fibration. We know \cite[p.90]{IntroFolandLie} that all the leaves of
foliation $(M,\fol)$ are diffeomorphic and have trivial holonomy. 
Let $L$ be a leaf of $\fol$ and $x_0\in L$.
We can choose complete projectable vector fields
$Y_1,\ldots,Y_q$  on $(M,\fol)$ such that the span of $(Y_1)_{x_0}\ldots,(Y_q)_{x_0}$
is complementary to the tangent space of $\fol$ at $x_0$.
The flows of these vector fields combine to a map
$T:L\times\RR^q\to M$, 
$$
T(x,(t_1,\ldots,t_q))=(e^{t_1Y_1}\circ\cdots\circ e^{t_qY_q})(x),
$$
which has the property that any $T_t=T(\textnormal{-},t)$ is a diffeomorphism
between $L$ and another leaf of $\fol$.
We can choose a small open ball $B\subset\RR^q$ centered at $0$
and an open contractible neighbourhood  $U$ of $x_0$ in $L$
such that $T|_{U\times B}$ is a smooth open embedding and in fact
a foliation chart. Write $W=T(U\times B)$. Observe that
$T(L\times B)$ is an open neighbourhood of $L$ in $M$ which
equals the saturation of $W$.
For any $y\in W$ denote by $(x_{y},t_{y})$ for the element
in $U\times B$ with $T(x_{y},t_{y})=y$.

We need to show that there exists a diffeomorphism
$\phi:W\times s^{-1}(x_0)\to s^{-1}(W)$ over $W$. To construct $\phi$,
let $y\in W$ and let $\gamma$ be a path in $L$ starting at $x_{0}$.
Then $T_{y_{t}}\circ\gamma$ is a path inside the leaf through $y$,
and we define $\phi(y,\gamma)$ to be the holonomy class
of the concatenation of $T_{y_{t}}\circ\gamma$ with a path inside
$T(U\times\{y_t\})$ starting at $y$. The result, of course, depends only
on the holonomy class of $\gamma$ and gives us a well defined smooth
map $\phi$.

To see that $\phi$ is a diffeomorphism, note that $\phi^{-1}$ can be described
in a similar way: For any path $\tau$ representing an element in $s^{-1}(W)$
starting at a point $y\in W$, we have the path
$(T_{y_{t}})^{-1}\circ\tau$ in $L$ starting at a point in $U$.
Then $\phi^{-1}(\tau)$ is given by the pair $(y,\gamma')$,
where $\gamma'$ is the holonomy class of the concatenation of
$\phi^{-1}(\tau)$ with a path in $U$ starting at $x_0$.
\end{proof}

\begin{Theorem}
For any transversely complete foliation
$\fol$ on a connected manifold $M$ with leaf $L$ 
we have a long exact sequence
$$
\ldots\to\pi_n(L)\to\pi_n(M)\to\pi_n(\Hol(M,\fol))\to\pi_{n-1}(L)\to\ldots
$$
\end{Theorem}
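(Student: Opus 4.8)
The plan is to combine Theorem~\ref{theo:HolSerre} with the long exact sequence for Serre groupoids of Theorem~\ref{Thm:LES_serreGPD}. By Theorem~\ref{theo:HolSerre} the holonomy groupoid $\Hol(M,\fol)$ of a transversely complete foliation is a Serre groupoid, and its space of objects is $M$ itself. Hence, after fixing a base point $a_0\in M$, I would apply Theorem~\ref{Thm:LES_serreGPD} directly to $(\Hol(M,\fol),a_0)$ to obtain a natural long exact sequence
$$
\ldots\to\pi_n(s^{-1}(a_0),1_{a_0})\stackrel{\pi_n(t)}{\to}\pi_n(M,a_0)\to\pi_n(\Hol(M,\fol),a_0)\to\pi_{n-1}(s^{-1}(a_0),1_{a_0})\to\ldots.
$$
It would then remain to identify the source fiber $s^{-1}(a_0)$ with the leaf $L$ through $a_0$, compatibly with the homomorphism $\pi_n(t)$.

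First I would recall that the source fiber $s^{-1}(a_0)$ of the holonomy groupoid consists of the holonomy classes of paths in $L$ starting at $a_0$, and is naturally identified with the holonomy covering space $\widetilde{L}$ of the leaf $L$, the target map $t:s^{-1}(a_0)\to M$ corresponding under this identification to the composition $\widetilde{L}\to L\hookrightarrow M$ of the covering projection with the inclusion of the leaf. For a transversely complete foliation every leaf has trivial holonomy (this was already used in the proof of Theorem~\ref{theo:HolSerre}, see~\cite[p.90]{IntroFolandLie}); consequently the holonomy covering $\widetilde{L}\to L$ is a diffeomorphism and $s^{-1}(a_0)\cong L$. Under this identification $\pi_n(t):\pi_n(s^{-1}(a_0))\to\pi_n(M)$ becomes precisely the map $\pi_n(L)\to\pi_n(M)$ induced by the leaf inclusion, so replacing $\pi_n(s^{-1}(a_0))$ by $\pi_n(L)$ turns the sequence above into the asserted one. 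Since $M$ is connected, the choice of $a_0$ (and hence of $L$) is immaterial up to the base-point-change isomorphisms of Section~\ref{section:sec1}, and all leaves are diffeomorphic, so the statement is well posed.

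The main obstacle, and the only step requiring genuine care, is this identification of the source fiber with the leaf together with the matching of $\pi_n(t)$ to the leaf inclusion; the rest is a formal concatenation of the two cited theorems. The identification rests squarely on the triviality of holonomy for transversely complete foliations, which collapses the a~priori holonomy covering $\widetilde{L}$ to $L$ itself, so that the fiber homotopy groups $\pi_n(s^{-1}(a_0))$ coincide with $\pi_n(L)$.
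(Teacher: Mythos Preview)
Your proposal is correct and follows essentially the same route as the paper: it combines Theorem~\ref{theo:HolSerre} with Theorem~\ref{Thm:LES_serreGPD} and then identifies the source fiber $s^{-1}(a_0)$ with the leaf $L$ using the triviality of holonomy for transversely complete foliations. The paper's proof is terser, but your added justification that $s^{-1}(a_0)\cong\widetilde{L}\cong L$ and that $t$ corresponds to the leaf inclusion is exactly the content behind the paper's phrase ``the source fibers of $\Hol(M,\fol)$ are diffeomorphic to $L$''.
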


\begin{proof}
This follows from Theorem \ref{theo:HolSerre}, Theorem \ref{Thm:LES_serreGPD}
and the fact that the source fibers of $\Hol(M,\fol)$ are diffeomorphic to $L$.
\end{proof}

Let $\fol$ be a Riemannian foliation of codimension $q$
on compact connected manifold $M$. Then
\cite{IntroFolandLie,Molino1988} the canonical
lifted foliation $\widetilde{\fol}$
to the associated transverse orthogonal frame bundle $OF(M,\fol)$ is
transversely parallelizable. Also we have a well defined functor
$\Phi:\Hol(OF(M,\fol),\widetilde{\fol})\to\Hol(M,\fol)$
because $(OF(M,\fol),\widetilde{\fol})\to(M,\fol)$
is a transverse principal $O(q)$-bundle.

\begin{Theorem}
Let $\fol$ be a Riemannian foliation
on compact connected manifold $M$ with base point $m_0$.
Then the the holonomy groupoid $\Hol(M,\fol)$ is
a Serre groupoid in we have a natural long exact sequence
$$
\ldots\to\pi_n(\widetilde{L})\to\pi_n(M)\to\pi_n(\Hol(M,\fol))\to\pi_{n-1}(\widetilde{L})\to\ldots,
$$
where $\widetilde{L}$ is the holonomy cover of the leaf $L$ of $\fol$ through $m_0$.
\end{Theorem}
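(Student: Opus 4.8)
The plan is to reduce to the transversely complete case treated in Theorem~\ref{theo:HolSerre} by passing to the transverse orthonormal frame bundle, where Molino's structure theory controls the geometry. Since $\fol$ is Riemannian of codimension $q$ on the compact manifold $M$, the frame bundle $OF(M,\fol)$ is compact, being a bundle over $M$ with compact structure group $O(q)$, and the canonically lifted foliation $\widetilde{\fol}$ is transversely parallelizable, as recorded above. On a compact manifold a transversely parallelizable foliation carries a global transverse parallelism by \emph{complete} projectable vector fields, so $(OF(M,\fol),\widetilde{\fol})$ is transversely complete in the sense used in the proof of Theorem~\ref{theo:HolSerre}. That theorem then shows that $\Hol(OF(M,\fol),\widetilde{\fol})$ is a Serre groupoid; in particular its source map $\widetilde{s}:\Hol(OF(M,\fol),\widetilde{\fol})_1\to OF(M,\fol)$ is a Serre fibration.

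Next I would transfer this property to the base. The projection functor $\Phi:\Hol(OF(M,\fol),\widetilde{\fol})\to\Hol(M,\fol)$ restricts to $\pi:OF(M,\fol)\to M$ on objects, and by Proposition~\ref{Prop:Sf} it is a Serre fibration on objects and on arrows; moreover the path-lifting property of holonomy classes discussed before that proposition makes the map on arrows $\Phi_1:\Hol(OF(M,\fol),\widetilde{\fol})_1\to\Hol(M,\fol)_1$ surjective. Functoriality of $\Phi$ gives the identity $\pi\circ\widetilde{s}=s\circ\Phi_1$, where $s:\Hol(M,\fol)_1\to M$ is the source map we must understand. As $\pi$ is a fiber bundle, hence a Serre fibration, and $\widetilde{s}$ is a Serre fibration, the composite $\pi\circ\widetilde{s}=s\circ\Phi_1$ is a Serre fibration. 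The key step is then a left-cancellation property for Serre fibrations: if $f$ is a surjective Serre fibration and $g\circ f$ is a Serre fibration, then $g$ is a Serre fibration. I would prove this directly from the cube-based definition: given a lifting problem for $g$ along $I^n\times\{0\}\hookrightarrow I^{n+1}$, first lift the bottom face through $f$ (possible since $f$ is a surjective Serre fibration and $I^n$ is contractible), then solve the induced lifting problem for $g\circ f$, and finally compose the resulting lift with $f$. Applying this with $f=\Phi_1$ and $g=s$ shows that $s$ is a Serre fibration, that is, $\Hol(M,\fol)$ is a Serre groupoid.

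With $\Hol(M,\fol)$ established as a pointed Serre groupoid, Theorem~\ref{Thm:LES_serreGPD} immediately yields the long exact sequence
$$
\ldots\to\pi_n(s^{-1}(m_0),1_{m_0})\to\pi_n(M,m_0)\to\pi_n(\Hol(M,\fol),m_0)\to\pi_{n-1}(s^{-1}(m_0),1_{m_0})\to\ldots
$$
It then remains to identify the source fiber $s^{-1}(m_0)$ with the holonomy cover $\widetilde{L}$ of the leaf $L$ through $m_0$: an element of $s^{-1}(m_0)$ is exactly a holonomy class of a path in $L$ issuing from $m_0$, which is a point of $\widetilde{L}$, and this bijection is a homeomorphism. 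Substituting $s^{-1}(m_0)=\widetilde{L}$ gives the asserted sequence.

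The main obstacle I expect lies in the transfer step rather than in the final bookkeeping. One must genuinely verify that $\widetilde{\fol}$ is transversely complete on the compact frame bundle, so that Theorem~\ref{theo:HolSerre} applies, and that the left-cancellation property holds for the paper's definition of Serre fibration. The latter is clean but relies on the homotopy-lifting argument sketched above, and here the surjectivity of $\Phi_1$ is essential---without it the bottom face cannot be lifted and the cancellation breaks down.
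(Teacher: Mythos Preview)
Your proposal is correct and follows essentially the same route as the paper: both use the commutative square relating the source maps of $\Hol(OF(M,\fol),\widetilde{\fol})$ and $\Hol(M,\fol)$ via the projection functor $\Phi$, observe that three of the four sides are Serre fibrations (by Theorem~\ref{theo:HolSerre} and Proposition~\ref{Prop:Sf}), and deduce that the remaining source map $s$ is a Serre fibration. You are simply more explicit than the paper in justifying the left-cancellation step and in identifying the source fiber $s^{-1}(m_0)$ with the holonomy cover $\widetilde{L}$, both of which the paper leaves to the reader.
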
 

\begin{proof}
In the commutative diagram
$$
\xymatrix{
\Hol(OF(M,\fol),\widetilde{\fol})_1\ar[r]^-\Phi\ar[d]_-s & \Hol(M,\fol)_1\ar[d]^-s\\
OF(M)\ar[r]^-\Phi & M,
}
$$
the left vertical map and both horizontal maps are Serre fibrations
(Theorem \ref{theo:HolSerre} and Proposition \ref{Prop:Sf}).
From this, follows that the right $s$ is also a Serre fibration and
we get the associated long exact sequence.
\end{proof}

We can now apply Proposition~\ref{Prop:Sf}
to the case of Riemannian foliation
$(M,\fol)$. We get the following theorem.

\begin{Theorem}
Let $(M,\fol)$ be a Riemannian foliation of codimension $q$ on a compact connected
manifold $M$. Then we have a natural long exact sequence of homotopy groups
$$
\ldots\to\pi_n(O(q))\to\pi_n(\Hol(OF(M,\fol),\widetilde{\fol}))\to
\pi_n(\Hol(M,\fol))\to\pi_{n-1}(O(q))\to\ldots,
$$
where $(OF(M,\fol),\widetilde{\fol})$ is the associated lifted foliation on the the
transverse orthogonal frame bundle and $O(q)$ is the unitary group of order $q$
viewed as a topological space.
\end{Theorem}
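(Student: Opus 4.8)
The plan is to realise the stated sequence as the long exact sequence of Theorem~\ref{Theorem:LES} attached to the Morita map $\langle\Phi\rangle$ associated to the projection functor $\Phi:\Hol(OF(M,\fol),\widetilde{\fol})\to\Hol(M,\fol)$, and then to identify the fibre groupoid of $\langle\Phi\rangle$ with the unit groupoid on $O(q)$. Since $(OF(M,\fol),\widetilde{\fol})\to(M,\fol)$ is a transverse principal $O(q)$-bundle, Proposition~\ref{Prop:Sf} applies verbatim and shows that $\langle\Phi\rangle$ is a Serre fibration. Choosing a base point $m_0\in M$ and a frame $e_0\in\pi^{-1}(m_0)$, Theorem~\ref{Theorem:LES} then yields a natural long exact sequence linking $\pi_n(\Hol(OF(M,\fol),\widetilde{\fol}))$, $\pi_n(\Hol(M,\fol))$ and the homotopy groups of the fibre groupoid $\Hol(OF(M,\fol),\widetilde{\fol})\ltimes\epsilon^{-1}(m_0)$; by the Example following Theorem~\ref{Theorem:LES}, this fibre is the translation groupoid of the left $\Hol(OF(M,\fol),\widetilde{\fol})$-action on $\epsilon^{-1}(m_0)=OF(M)\times_M s^{-1}(m_0)$. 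Thus everything reduces to computing the homotopy groups of this fibre groupoid.

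The decisive step is to show that this fibre groupoid is Morita equivalent to the unit groupoid $(O(q)\rightrightarrows O(q))$, so that its homotopy groups are $\pi_n(O(q))$. I would begin by identifying the object space: writing $L$ for the leaf of $\fol$ through $m_0$, the source fibre $s^{-1}(m_0)$ of $\Hol(M,\fol)$ is the holonomy cover $\widetilde{L}$ of $L$, and hence $\epsilon^{-1}(m_0)$ is canonically the pullback $\widehat{OF}$ of the frame bundle $OF(M)|_L$ along the holonomy covering $\widetilde{L}\to L$, an $O(q)$-principal bundle over $\widetilde{L}$. Two frames are identified by an arrow of the translation groupoid exactly when they lie in a common leaf of $\widetilde{\fol}$ and the projected holonomy in $\Hol(M,\fol)$ matches; because $\widetilde{\fol}$ is transversely complete (Theorem~\ref{theo:HolSerre}) its leaves carry trivial holonomy, so this arrow is then unique and the action is free.

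It remains to see that the orbit space is precisely $O(q)$. Each leaf of $\widetilde{\fol}$ lying over $L$ projects to $L$ by the same holonomy covering, so, after a choice of identification with $\widetilde{L}$, it determines a section of $\widehat{OF}$ over $\widetilde{L}$; the $O(q)$-translates of one such section trivialise the bundle as $\widehat{OF}\cong\widetilde{L}\times O(q)$, with the leaves appearing as the slices $\widetilde{L}\times\{a\}$. These slices are exactly the orbits of the free action, so the orbit space is $O(q)$ and the translation groupoid is Morita equivalent to $(O(q)\rightrightarrows O(q))$. Substituting $\pi_n(\Hol(OF(M,\fol),\widetilde{\fol})\ltimes\epsilon^{-1}(m_0))\cong\pi_n(O(q))$ into the sequence of Theorem~\ref{Theorem:LES} gives the claimed long exact sequence, with naturality inherited from that theorem.

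I expect the main obstacle to be exactly this last identification of the orbit space. The delicate point is that distinct elements of $O(q)$ produce distinct orbits in $\widehat{OF}$ even when the corresponding leaves of $\widetilde{\fol}$ coincide downstairs in $OF(M)|_L$ — that is, even when they differ by the image $H\subset O(q)$ of the holonomy group of $L$. This is accounted for precisely because the holonomy is unwound over the holonomy cover $\widetilde{L}$, so that $H$ acts trivially on the slices of $\widehat{OF}$; making this rigorous, and checking that the section and trivialisation above are continuous and $O(q)$-equivariant, is where the trivial-holonomy property of the transversely complete foliation $\widetilde{\fol}$ is genuinely used.
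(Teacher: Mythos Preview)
Your proposal is correct and follows the same overall strategy as the paper: apply Proposition~\ref{Prop:Sf} to see that $\langle\Phi\rangle$ is a Serre fibration, invoke Theorem~\ref{Theorem:LES}, and then identify the fibre groupoid $\Gk=\Hol(OF(M,\fol),\widetilde{\fol})\ltimes(OF(M,\fol)\times_{M}s^{-1}(m_0))$ with the unit groupoid on $O(q)$.

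The only difference is in how that last identification is carried out. You argue geometrically: describe the object space as the pullback $\widehat{OF}$ over the holonomy cover $\widetilde{L}$, show the action is free using trivial holonomy of the leaves of $\widetilde{\fol}$, trivialise $\widehat{OF}\cong\widetilde{L}\times O(q)$ via leaf-sections, and read off the orbit space. The paper instead takes the embedding $\iota:O(q)\to OF(M,\fol)\times_M s^{-1}(m_0)$, $U\mapsto(e_0U,1_{m_0})$, and simply checks that the induced groupoid $\iota^{\ast}(\Gk)$ is the unit groupoid and is Morita equivalent to $\Gk$. Your route gives a more explicit geometric picture (and makes visible exactly where the trivial-holonomy property of the transversely complete foliation $\widetilde{\fol}$ enters), while the paper's induced-groupoid argument is shorter and avoids the detailed description of $s^{-1}(m_0)$ and the holonomy unwinding, needing only that every object of $\Gk$ is isomorphic to one in $\iota(O(q))$ and that the isotropy there is trivial.
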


\begin{proof}
The long exact sequence is obtained by Theorem \ref{Theorem:LES} and
Proposition~\ref{Prop:Sf}. We only need to identify the fiber
of the principal bundle associated to the functor
$\Hol(OF(M,\fol),\widetilde{\fol})\to\Hol(M,\fol)$, which is
$$ \Gk=\Hol(OF(M,\fol),\widetilde{\fol})\ltimes(OF(M,\fol)\times_{M}s^{-1}(m_{0})),$$
to be Morita equivalent to the topological space $O(q)$.
Here $s^{-1}(m_{0})$ denotes the source fiber of the groupoid
$\Hol(M,\fol)$ over a chosen base point $m_0$ of $M$.
If we choose a base point $e_{0}$ in the fiber
of the projection $\pi:OF(M,\fol)\to M$ over $m_0$,
we obtain a canonical embedding
$\iota:O(q)\to (OF(M,\fol)\times_{M}s^{-1}(m_{0}))$,
$U\mapsto (e_0U,1_{m_0})$.
Now one can easily check that the induced groupoid
$\iota^{\ast}(\Gk)$ is Morita equivalent to $\Gk$ \cite{PoissonGeo}
and equal to the unit groupoid $(O(q)\rightrightarrows O(q))$.
\end{proof}

\section{Effect of an \'{e}tale groupoid}

\textsl{
It is natural to represent an orbifold with a proper \'{e}tale
groupoid \cite{Moerdijk1}. This groupoid may not be effective,
and even if it is, it is often desirable to represent it as
the effect of another, simpler \'{e}tale groupoid. In
this section, we study the connection between the homotopy groups of
a proper \'{e}tale groupoid and the homotopy groups of its associated
effect groupoid.
}
\vspace{0.25cm}

Let $\Gg$ be an \'{e}tale Lie groupoid. We have the functor
$\Eff:\Gg\to\Eff(\Gg)$
between \'{e}tale groupoids $\Gg$ and $\Eff(\Gg)$ \cite{IntroFolandLie}.
This functor is the identity on objects and
surjective local diffeomorphism on arrows.
If the groupoid $\Gg$ is proper, then $\Eff(\Gg)$ is also proper
and the map
$\Eff:\Gg_1\to\Eff(\Gg)_1$ is proper.
Because both $\Gg_1$ and $\Eff(\Gg)_1$ are  Hausdorff manifolds,
this implies that the map $\Eff:\Gg_1\to\Eff(\Gg)_1$
is in fact a covering projection (see e.g.
\cite{HenriquesNONEFF}).

\begin{Proposition}
Let $\Gg$ be a proper \'{e}tale Lie groupoid. Then 
the principal bundle associated to
the functor
$\Eff:\Gg\to\Eff(\Gg)$ is a Serre fibration.
\end{Proposition}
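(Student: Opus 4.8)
The plan is to apply Proposition~\ref{prop:Serrefunctor} to the functor $\phi=\Eff\colon\Gg\to\Eff(\Gg)$, taking it in the role of the functor from $\Gh$ to $\Gg$ in that proposition (so that here the source groupoid is $\Gg$ and the target groupoid is $\Eff(\Gg)$; both are \'etale Lie groupoids, hence topological groupoids). That proposition has exactly two hypotheses to check, namely that $\Eff$ is a Serre fibration on objects, and that the map $(\Eff,s)\colon\Gg_1\to\Eff(\Gg)_1\times_{\Eff(\Gg)_0}\Gg_0$ is a surjective Serre fibration. Once both are verified, the conclusion that the associated principal $\Eff(\Gg)$-bundle $\langle\Eff\rangle$ over $\Gg$ is a Serre fibration is immediate.

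First I would dispose of the condition on objects. Since the functor $\Eff$ is the identity on objects, the map $\Eff\colon\Gg_0\to\Eff(\Gg)_0$ is the identity homeomorphism, which is trivially a Serre fibration. Next I would unwind the fibered product $\Eff(\Gg)_1\times_{\Eff(\Gg)_0}\Gg_0$. Because the structure map $\Gg_0\to\Eff(\Gg)_0$ appearing in it is again the identity, the second coordinate of any element is forced to equal the source of the first, so the projection $\mathrm{pr}_1\colon\Eff(\Gg)_1\times_{\Eff(\Gg)_0}\Gg_0\to\Eff(\Gg)_1$ is a homeomorphism. Under this identification the map $(\Eff,s)$ is carried to $\Eff\colon\Gg_1\to\Eff(\Gg)_1$ itself.

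It therefore remains to see that $\Eff\colon\Gg_1\to\Eff(\Gg)_1$ is a surjective Serre fibration. Surjectivity is built into the setup, as $\Eff$ is a surjective local diffeomorphism on arrows. For the fibration property I would invoke the fact recalled just before the statement: since $\Gg$ is proper and \'etale and both $\Gg_1$ and $\Eff(\Gg)_1$ are Hausdorff manifolds, the proper local diffeomorphism $\Eff\colon\Gg_1\to\Eff(\Gg)_1$ is a covering projection. A covering projection is a Hurewicz fibration, and in particular a Serre fibration, so $(\Eff,s)$ is a surjective Serre fibration and both hypotheses of Proposition~\ref{prop:Serrefunctor} hold.

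The only step that is not purely formal is the last one, where the geometric content of properness enters through the conclusion that $\Eff$ is a covering on arrows; everything else is bookkeeping with the fibered product of an object-identity functor. I expect the main (and essentially the sole) point requiring comment to be the standard observation that a covering map is a Serre fibration, after which Proposition~\ref{prop:Serrefunctor} yields the claim directly.
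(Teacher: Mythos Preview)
Your proof is correct and follows exactly the same approach as the paper: it invokes Proposition~\ref{prop:Serrefunctor}, noting that $\Eff$ is the identity on objects and a covering projection on arrows. You simply spell out in more detail the identification of $(\Eff,s)$ with $\Eff$ via the object-identity and the standard fact that covering maps are Serre fibrations, which the paper leaves implicit.
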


\begin{proof}
We have already seen that $\Eff$ is a covering projection on arrows,
while it is the identity on objects. The proposition now follows from 
Proposition \ref{prop:Serrefunctor}.
\end{proof}

For a proper \'{e}tale Lie groupoid $\Gg$,
let us denote by $\Gg_{a_0}^0$ the group of ineffective arrows in the
isotropy group of $\Gg_1$ at a point $a_0\in\Gg_0$,
namely all such arrows $g\in\Gg_1$ from $a_0$ to $a_0$
such that $\Eff(g)=1_{a_0}$.

\begin{Theorem}\label{Thm:Eff}
Let $(\Gg,a_0)$ be a pointed proper \'{e}tale Lie groupoid.
Then the $\Eff$ functor
induces isomorphisms
$\pi_n(\Gg,a_0)\cong\pi_n(\Eff(\Gg),a_0)$, for
$n=0,3,4,5,\ldots$, and there is an exact sequence 
$$
0\to\pi_2(\Gg)\to\pi_2(\Eff(\Gg))\to
\Gg_{a_0}^0\to\pi_1(\Gg)\to\pi_1(\Eff(\Gg))\to 0.
$$
\end{Theorem}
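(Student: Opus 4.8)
The plan is to apply Theorem~\ref{Theorem:LES} to the functor $\Eff\colon\Gg\to\Eff(\Gg)$, whose associated principal bundle $\langle\Eff\rangle$ is a Serre fibration by the preceding proposition. This yields a natural long exact sequence
$$
\ldots\to\pi_n(F)\to\pi_n(\Gg)\stackrel{\pi_n(\Eff)}{\to}\pi_n(\Eff(\Gg))\to\pi_{n-1}(F)\to\ldots,
$$
where $F$ is the fiber of $\langle\Eff\rangle$ over $a_0$. Since $\Eff$ is the identity on objects, this fiber is the translation groupoid $F=\Gg\ltimes\epsilon^{-1}(a_0)$ with $\epsilon^{-1}(a_0)\cong s^{-1}(a_0)$, the source fiber of $\Eff(\Gg)$ over $a_0$; as $\Eff(\Gg)$ is \'{e}tale this is a discrete set $D$, on which $\Gg$ acts on the left by $h\cdot\bar g=\Eff(h)\bar g$, with base point $p_0=1_{a_0}\in D$. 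So the problem reduces to computing $\pi_n(F,p_0)$.

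The key step is to show that $F$ is Morita equivalent to the group $\Gg_{a_0}^0$ regarded as a one-object topological groupoid $(\Gg_{a_0}^0\rightrightarrows\ast)$. First I would compute the isotropy of $F$ at $p_0$: an arrow $(h,p_0)$ is a loop at $p_0$ precisely when $t(h)=a_0$ and $\Eff(h)=1_{a_0}$, that is, exactly when $h\in\Gg_{a_0}^0$. Next I would check that $\Gg$ acts transitively on $D$: since $\Eff$ is a covering projection on arrows it is surjective, so every $\bar g\in D$ lifts to some $h\in\Gg_1$ with $s(h)=a_0$ and $\Eff(h)=\bar g$, which gives an arrow of $F$ from $p_0$ to $\bar g$. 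Because $\Gg$ is proper \'{e}tale, $\Gg_{a_0}^0$ is a finite discrete group, and the essential-surjectivity map $s\colon t^{-1}(p_0)\to D$ is a surjection of discrete sets and therefore admits local sections. Together these facts say precisely that the inclusion $(\Gg_{a_0}^0\rightrightarrows\ast)\to F$ sending $\ast\mapsto p_0$ and each ineffective arrow to the corresponding loop is a weak equivalence, so $F$ is Morita equivalent to $(\Gg_{a_0}^0\rightrightarrows\ast)$.

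From this weak equivalence and the computation of the homotopy groups of a one-object groupoid from Section~\ref{sec:serre} (giving $\pi_n(\Gg_{a_0}^0\rightrightarrows\ast)\cong\pi_{n-1}(\Gg_{a_0}^0)$), I obtain $\pi_1(F)\cong\Gg_{a_0}^0$, while $F$ is connected so $\pi_0(F)=\ast$, and $\pi_n(F)=0$ for all $n\geq 2$ since $\Gg_{a_0}^0$ is discrete. Substituting these values into the long exact sequence produces $0\to\pi_n(\Gg)\to\pi_n(\Eff(\Gg))\to 0$, hence an isomorphism, for every $n\geq 3$; the connectedness of the fiber gives the isomorphism on $\pi_0$; and the bottom of the sequence collapses to
$$
0\to\pi_2(\Gg)\to\pi_2(\Eff(\Gg))\to\Gg_{a_0}^0\to\pi_1(\Gg)\to\pi_1(\Eff(\Gg))\to 0,
$$
which is exactly the claimed exact sequence.

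I expect the main obstacle to be the identification of the fiber $F$ up to Morita equivalence, specifically verifying transitivity of the $\Gg$-action on $D$ and the local-section condition for the essential-surjectivity map; once this is done, the rest is a mechanical reading off of the long exact sequence. The point where the full strength of the hypotheses enters is the assumption that $\Gg$ is proper \'{e}tale: this is what makes $\Eff$ a covering projection on arrows (hence surjective, which gives transitivity) and what forces $\Gg_{a_0}^0$ to be a finite discrete group, so that its higher homotopy vanishes and the long exact sequence degenerates in the asserted ranges.
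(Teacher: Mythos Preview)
Your proposal is correct and follows essentially the same route as the paper: apply Theorem~\ref{Theorem:LES} to the Serre fibration $\langle\Eff\rangle$, identify the fiber $\Gg\ltimes\sigma^{-1}(a_0)$ as a transitive groupoid Morita equivalent to the discrete group $\Gg_{a_0}^0$, and read off the result from the long exact sequence. You have in fact spelled out in detail the transitivity and isotropy computations that the paper leaves as ``one can check,'' so there is nothing to correct.
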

  
\begin{proof}
We have already seen that the principal bundle associated to the
functor $\Eff$ is a Serre fibration, so there is
the associated long exact sequence as in Theorem \ref{Theorem:LES}.
The fiber appearing in this exact sequence is
the groupoid $\Gg\ltimes\sigma^{-1}(a_0)$, where $\sigma$ is the source map of the
groupoid $\Eff(\Gg)$. One can check that
this groupoid is transitive and therefore
weakly equivalent to a group, which in this case
exactly the discrete group $\Gg_{a_0}^0$.
It follows that $\pi_1(\Gg_{a_0}^0\rightrightarrows \ast)=\Gg_{a_0}^0$,
while $\pi_n(\Gg_{a_0}^0\rightrightarrows \ast)=0$ for $n\neq 1$.
\end{proof}
  
\begin{Example} \rm
Let $(M,m_0)$ be a pointed manifold and
let $\Gamma$ be a discrete group acting smoothly on $M$ in such a way
that the translation groupoid $\Gamma\ltimes M$ is proper. 
This groupoid is a Serre
groupoid, so its effect $\Eff(\Gamma\ltimes M)$ is also a Serre groupoid.
Using Theorem \ref{Thm:LES_serreGPD} for both groupoids,
we see that $\pi_2(\Gamma\ltimes M)\cong\pi_2(M)$
and $\pi_2(\Eff(\Gamma\ltimes M))\cong\pi_2(M)$; furthermore, the
induced isomorphism
$\pi_2(\Gamma\ltimes M)\cong\pi_2(\Eff(\Gamma\ltimes M)$
is exactly $\pi_2(\Eff)$. From Theorem \ref{Thm:Eff}
we get isomorphisms
$$\pi_n(\Gamma\ltimes M)\cong\pi_n(\Eff(\Gamma\ltimes M)),$$
for $n\neq 1$, and a short exact sequence
$$
0\to \Gamma_{m_0}^0\to\pi_1(\Gamma\ltimes M)\to
\pi_1(\Eff(\Gamma\ltimes M))\to 0,
$$
where $\Gamma_{m_0}^0$ is the subgroup of $\Gamma$
containing all the elements $g\in \Gamma$ which act on $M$
by a diffeomorphism with trivial germ at $m_0$.
\end{Example}

\end{document}